\newtheorem{theorem}{Theorem}[section]
\newtheorem{lemma}[theorem]{Lemma}
\newtheorem{corollary}[theorem]{Corollary}
\theoremstyle{definition}
\newtheorem{defi}[theorem]{Definition}
\newtheorem{remark}[theorem]{Remark}
\def\O{\Omega}
\def\w{\omega}
\def\o{\omega}
\def\P{\mathcal{P}}
\def\d{{\rm d}}
\newcommand{\uu}{{\bf u}}
\newcommand{\vv}{{\bf v}}
\newcommand{\ww}{{\bf w}}
\newcommand{\di}{\mbox{div\,}}
\newcommand{\R}{{\mathbb R}}
\newcommand{\N}{{\mathbb N}}
\newcommand{\bs}{\bigskip}
\begin{document}

\title[Korn inequality]{Weighted Korn inequality on John domains}

\author[F. L\'opez Garc\'\i a]{Fernando L\'opez Garc\'\i a}
\address{University of California Riverside\\ Department of Mathematics\\ 
900 University Ave. Riverside (92521), CA, USA} 
\email{fernando.lopezgarcia@ucr.edu}

\date{\today}

\begin{abstract} We show a weighted version of Korn inequality on bounded euclidean John domains, where the weights are nonnegative powers of the distance to the boundary. In this theorem, we also provide an estimate of the constant involved in the inequality which depends on the power that appears in the weight and a geometric condition that characterizes John domains. The proof uses a local-to-global argument based on a certain decomposition of functions. 

In addition, we prove the solvability in weighted Sobolev spaces of $\di\uu=f$ on the same class of domains. In this case, the weights are nonpositive powers of the distance to the boundary. The constant appearing in this problem is also estimated. 
\end{abstract}

\subjclass[2010]{Primary: 26D10; Secondary: 46E35, 74B05}

\keywords{Korn inequality, divergence problem, weighted Sobolev spaces, distance weights, John domains, Boman domains, trees, decomposition}

\maketitle

\section{Introduction}
\label{intro}
\setcounter{equation}{0}

Let $\O\subset\R^n$ be a bounded domain for $n\geq 2$ and $1<p<\infty$. The classical Korn inequality states that 
\begin{eqnarray}\label{Korn ineq.}
\|D\uu\|_{L^p(\Omega)^{n\times n}}\leq C \|\varepsilon(\uu)\|_{L^p(\Omega)^{n\times n}}
\end{eqnarray}
for any vector field $\uu$ in the Sobolev space $W^{1,p}(\O)^n$ under appropriate conditions. By $D\uu$ we denote the differential matrix of $\uu$ and by $\varepsilon(\uu)$ its symmetric part. Namely, 
\[\varepsilon_{ij}(\uu)=\frac{1}{2}\left(\frac{\partial u_i}{\partial x_j}+\frac{\partial u_j}{\partial x_i}\right).\]

 Naturally, the constant $C$ depends only on $\O$ and $p$. This inequality plays a fundamental role in the analysis of the linear elasticity equations, where $\uu$ represents a displacement field of an elastic body. The tensor $\varepsilon(\uu)$ 
is called the linearized strain tensor and (\ref{Korn ineq.}) implies the coercivity of the bilinear form associated to the underlying linear equations. 
The two conditions on the vector field considered by Korn in his seminal works \cite{K1,K2}
were: $\uu=0$ on $\partial \O$ (usually called {\it first case}), and $\int_\O \frac{\partial u_i}{\partial x_j}-\frac{\partial u_j}{\partial x_i}=0$ ({\it second case}).  These two conditions remove the non-constant infinitesimal rigid motions (i.e. fields $\uu$ such that the right-hand side of (\ref{Korn ineq.}) vanishes while the left one does not). 

Inequality (\ref{Korn ineq.}) in the first case can be simply proved on any arbitrary domain ${\O}$ by using the divergence theorem (see \cite{F,H}). Moreover, it is known that the optimal constant is equal to $\sqrt{2}$. However, in this work we deal with Korn inequality in the second case, where its validity depends on the geometry of the domain. This inequality has been studied under different assumptions on the domain. For example, it is known that the inequality is valid if $\Omega$ is a star-shaped domains with respect to a ball (see \cite{R}). This class contains the convex domains. The proof in \cite{R} is based on certain integral representations of the vector field $\uu$ in terms of $\varepsilon(\uu)$. Other authors have also studied this inequality on these domains by using different arguments, see \cite{H,KO,T} and references therein. Uniform domains also verify Korn inequality. This result was proved in \cite{DM} by modifying the extension operator given by Peter Jones in \cite{Jo}. The largest known family of domains where $(\ref{Korn ineq.})$ holds is the class of John domains. This class was introduced by Fritz John in \cite{Joh} and named after him by Martio and Sarvas in \cite{MS}. Let us recall the definition of this family. A bounded domain $\Omega\subset\R^n$, with $n\geq 2$, is called a {\it John domain} with parameter $\beta>1$ if there exists a point $x_0\in\O$ such that every $y\in\O$ has a rectifiable curve parameterized by arc length $\gamma:[0,l]\to\Omega$ such that $\gamma(0)=y$, $\gamma(l)=x_0$ and 
\begin{eqnarray*}
\text{dist}(\gamma(t),\partial\O)\geq \frac{1}{\beta}t
\end{eqnarray*}
for all $t\in[0,l]$, where $l$ is the length of $\gamma$. The set of John domains contains the one of Lipschitz domains and some others with very irregular boundaries, such as Kock snowflakes which has a fractal boundary. A version of Korn inequality different from (\ref{Korn ineq.}) on John domains can be found in \cite{ADM}. This result is obtained as a consequence of the main result of the mentioned article which deals with the solvability of $\di\uu=f$ with an appropriate a-priori estimate. In \cite{DRS}, the authors proved (\ref{Korn ineq.}) on John domains where the vector fields belong to a weighted Sobolev space with weights in the Muckenhoupt class $A_p$. More recently, a weighted version of Korn inequality different from the one treated in this article has been shown in \cite{JK}, where the weights are also nonnegative powers of the distance to the boundary. Its proof is based on the validity of a certain improved Poincar\'e inequality published in \cite{Hu} and generalized later in \cite{CW2}. 

In these notes, we are particularly interested in finding an estimate of the constant that appears in the inequality. This problem has been addressed in several articles. For instance in \cite{Du}, the author estimates the constant in (\ref{Korn ineq.}), with $p=2$, in terms of the ratio between the diameter of $\O$ and that of $B$, if $\O$ is a star-shaped domain with respect to a ball $B$. Another recent article dealing with the estimation of the constant in Korn inequality (an other equivalent results) on star-shaped domains is \cite{CD}. This last article considers planar domains. This problem has also been studied in the classical reference \cite{HP}.  However, we could not find in the literature estimates of the constant in Korn inequality when $\O$ is a John domain. 

The main theorem of these notes shows a weighted version of Korn inequality on John domains, where the weight is a nonnegative power of the distance to the boundary. Moreover, we estimate the Korn's constant in terms of the geometric condition introduced in (\ref{Boman tree}). Similar estimates for weighted Poincar\'e inequalities which depend on the eccentricity of a convex domain has been proved in \cite{ChD,CW}; the authors also consider nonnegative powers of the distance to the boundary. 
 
Given a vector field $\uu$ we denote by $\eta(\uu)$ the skew-symmetric part of the differential matrix $D\uu$ of $\uu$. Namely, 
\[\eta_{ij}(\uu)=\frac{1}{2}\left(\frac{\partial u_i}{\partial x_j}-\frac{\partial u_j}{\partial x_i}\right).\]

\begin{theorem}\label{Korn in John} Let $\Omega\subset\R^n$ be a bounded John domain with $n\geq 2$, $1<p<\infty$ and $\beta\in\R_{\geq 0}$. Then, there exists a constant $C$, depending only on $n$, $p$ and $\beta$, such that  
\begin{eqnarray}\label{weighted Korn John}
\left(\int_\Omega |D\uu|^p\rho^{p\beta}\, \d x\right)^{1/p} \leq C\,K^{n+\beta} \left(\int_\Omega |\varepsilon(\uu)|^p\rho^{p\beta}\, \d x\right)^{1/p}
\end{eqnarray}
for all vector field $\uu\in W^{1,p}(\Omega,\rho^\beta)^n$ that satisfies that $\int_{\O}\eta_{ij}(\uu)\,\rho^{\beta p}=0$, for $1\leq i<j\leq n$. The function $\rho(x)$ is the distance to the boundary of $\Omega$ and the constant $K$ is introduced in the geometric condition (\ref{Boman tree}).
\end{theorem}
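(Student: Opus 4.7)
The plan is to run a local-to-global argument: decompose $\O$ as a tree-indexed family of star-shaped subdomains on each of which a quantitative Korn inequality is already available, and then assemble the local estimates using the Boman chain condition that characterizes John domains. Since $D\uu=\varepsilon(\uu)+\eta(\uu)$, the theorem reduces immediately to the weighted estimate
\[
\left(\int_\O |\eta(\uu)|^p \rho^{p\beta}\,\d x\right)^{1/p} \leq C\,K^{n+\beta} \left(\int_\O |\varepsilon(\uu)|^p \rho^{p\beta}\,\d x\right)^{1/p},
\]
and the hypothesis $\int_\O \eta_{ij}(\uu)\,\rho^{p\beta}=0$ for $i<j$ is exactly the statement that each scalar component of $\eta(\uu)$ has zero $\rho^{p\beta}$-weighted mean.

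First, I would fix a Whitney-type covering $\{\O_t\}_{t\in\mathcal{T}}$ of $\O$ indexed by a rooted tree $\mathcal{T}$, so that each $\O_t$ is star-shaped with respect to a ball of radius comparable to $\mathrm{diam}(\O_t)$, the family has bounded overlap, and any two subdomains adjacent in $\mathcal{T}$ share a large overlap region $B_t$ whose size is controlled from below in terms of the geometric constant $K$ from (\ref{Boman tree}). On each such piece the weight $\rho$ is comparable to a constant, so known Korn-type results on star-shaped domains (see \cite{R,Du}) supply a local weighted Korn inequality on $\O_t$ with a constant that depends only on $n$ and $p$.

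The crux of the argument is a weighted decomposition lemma: any scalar $g\in L^p(\O,\rho^{p\beta})$ with $\int_\O g\,\rho^{p\beta}=0$ admits a representation $g=\sum_{t\in\mathcal{T}} g_t$ with $\mathrm{supp}(g_t)\subset \O_t$ and $\int_{\O_t} g_t\,\rho^{p\beta}=0$ for every $t$, subject to
\[
\sum_{t\in\mathcal{T}} \int_{\O_t} |g_t|^p \rho^{p\beta}\,\d x \leq C\,K^{p(n+\beta)} \int_\O |g|^p \rho^{p\beta}\,\d x.
\]
I would build $g_t$ inductively from the leaves of $\mathcal{T}$ toward the root by transferring the deficit of weighted mass from each child into the shared overlap $B_t$ of its parent; each transfer costs a factor proportional to the ratio of the $\rho^{p\beta}$-measure of $\O_t$ to that of $B_t$, and tracking this ratio through the tree by a Hardy-type inequality in $\ell^p(\mathcal{T})$ is what produces exactly the power $K^{n+\beta}$.

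With the lemma in hand, applying it to each component $\eta_{ij}(\uu)$ turns the global zero-mean condition into local zero-mean conditions $\int_{\O_t} \phi^{(t)}_{ij}\,\rho^{p\beta}=0$. Then local Korn on the star-shaped $\O_t$ bounds $\|\phi^{(t)}\|_{L^p(\O_t,\rho^{p\beta})}$ by the local $L^p$-norm of $\varepsilon(\uu)$; summing over $\mathcal{T}$ and using the bounded overlap yields the advertised global bound $C\,K^{n+\beta}\|\varepsilon(\uu)\|_{L^p(\O,\rho^{p\beta})}$. The main obstacle is the decomposition lemma with the sharp exponent: controlling how the weight $\rho^{p\beta}$ propagates along $\mathcal{T}$ is what both forces the extra $\beta$ in the exponent $n+\beta$ and requires a careful discrete Hardy inequality on the tree; all other ingredients (local Korn, bounded overlap, the algebraic reduction $D\uu=\varepsilon+\eta$) are essentially standard.
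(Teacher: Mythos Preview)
There is a genuine gap in the final step. After you decompose $\eta_{ij}(\uu)=\sum_t \phi^{(t)}_{ij}$ via the tree lemma, the pieces $\phi^{(t)}_{ij}$ are \emph{not} of the form $\eta_{ij}(\uu)\vert_{\O_t}-c_t$: they carry correction terms supported on the overlap cubes $B_s$ that are averages of $\eta_{ij}(\uu)$ over the whole shadow $W_s$. The local Korn inequality on $\O_t$ only tells you that $\inf_{c}\|\eta_{ij}(\uu)-c\|_{L^p(\O_t,\rho^\beta)}\le C\|\varepsilon(\uu)\|_{L^p(\O_t,\rho^\beta)}$; it says nothing about an artificial function $\phi^{(t)}_{ij}$ that merely has zero weighted mean on $\O_t$. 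So the claim ``local Korn bounds $\|\phi^{(t)}\|$ by $\|\varepsilon(\uu)\|_{L^p(\O_t)}$'' is unjustified. Note also that if that step were valid, the $K^{n+\beta}$ factor would never appear: your chain would be $\|\eta\|^p\le N^{p-1}\sum_t\|\phi^{(t)}\|^p\le C\sum_t\|\varepsilon(\uu)\|^p_{L^p(\O_t)}\le CN\|\varepsilon(\uu)\|^p$, and the decomposition estimate with its $K^{p(n+\beta)}$ would play no role at all.

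The paper fixes this by working on the \emph{dual} side. One decomposes test functions $g\in L^q(\O,\rho^{-\beta})$ with $\int_\O g=0$ (not $\eta_{ij}(\uu)$), obtaining $g=\sum_t g_t$ with $\int_{\O_t}g_t=0$ and $\sum_t\|g_t\|^q_{L^q(\O_t,\rho^{-\beta})}\le C\,K^{q(n+\beta)}\|g\|^q_{L^q(\O,\rho^{-\beta})}$; this is where the Hardy-type operator on the tree and the bound of Lemma~\ref{weighted T} produce the factor $K^{n+\beta}$. Then, using $\int g_t=0$, one writes $\int_\O\eta_{ij}(\uu)\,g=\sum_t\int_{\O_t}(\eta_{ij}(\uu)-\alpha_t)\,g_t$ for arbitrary constants $\alpha_t$, and now local Korn legitimately controls $\inf_{\alpha_t}\|\eta_{ij}(\uu)-\alpha_t\|_{L^p(\O_t,\rho^\beta)}$ by $\|\varepsilon(\uu)\|_{L^p(\O_t,\rho^\beta)}$. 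H\"older and bounded overlap then close the estimate (Theorem~\ref{Korn}). In short: the decomposition must hit the test function so that its zero-mean pieces let you subtract the optimal constant from $\eta_{ij}(\uu)$ on each $\O_t$; decomposing $\eta_{ij}(\uu)$ itself gives pieces to which Korn does not apply.
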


Notice that $\rho^\beta$ does not belong to the $A_p$ Muckenhoupt class for a big enough $\beta>0$. Thus, many of the techniques that use the theory of singular integral operators and depend on the continuity of the Hardy-Littlewood maximal operator may not be applicable in this case.

The rest of the paper is organized as follows: in Section \ref{Preliminaries} we introduce some definitions and notations. In Section \ref{Inequalities} we show how certain decompositions of functions can be used to extend the local validity of Korn inequality to the whole domain $\O$.  In this part of the article $\O$ could be any arbitrary bounded domain. Section \ref{Decomposition on general domains} deals with the existence of the required decomposition of functions. In Section \ref{John} we apply the results proved in the previous two sections on John domains to demonstrate the main result of the article.

\section{Definitions and Preliminaries}
\label{Preliminaries}
\setcounter{equation}{0}

Throughtout the paper, $\Omega\subset\R^n$ is a bounded domain with $n\geq 2$, $1<p,q<\infty$ with $\frac{1}{p}+\frac{1}{q}=1$, and $\o:\O\to\R$ is a positive  measurable function such that $\o^p$ is integrable over $\O$. By $\o^p(U)$ we denote $\int_U \o^p$. As usual, $L^p(\Omega,\w)$ denotes the space of Lebesgue measurable functions $u:\Omega\to\R$
equipped with the norm:
\[\|u\|_{L^p(\Omega,\w)}:=\left(\int_\Omega|u(x)|^p\w^p(x)\,\d x\right)^{1/p}.\]
Similarly, we define the weighted Sobolev spaces $W^{1,p}(\Omega,\w)$ as the
space of weakly differentiable functions $u:\Omega\to\R$ with
the norm:
\[\|u\|_{W^{1,p}(\Omega,\w)}:=\left(\int_\Omega|u(x)|^p\w^p(x)\,\d x
+\sum_{i=1}^n\int_\Omega\left|\frac{\partial u(x)}{\partial
x_i}\right|^p\w^p(x)\,\d x\right)^{1/p}.\]
In what follows, $C$ will denote various positive constants which may vary from line to line. We use $C_a$ or $C(a)$ to denote a constant which only depends on $a$.

Let us introduce the decompositions considered in this article.
\begin{defi}\label{Definition decomposition} Given $m\in\N_0$, let $\mathcal{P}_m$ be the space of polynomials of degree no greater than $m$. Moreover, let $\{\O_t\}_{t\in\Gamma}$ be a collection of open sets that satisfies $\Omega=\bigcup_{t\in \Gamma} \Omega_t$. Now, given $g\in L^1(\Omega)$ a function such that $\int g\, \varphi=0$ for all $\varphi\in\P_m$, we say that a collection of functions $\{g_t\}_{t\in\Gamma}$ is a $\P_m$-{\it orthogonal decomposition of $g$} subordinate to $\{\Omega_t\}_{t\in\Gamma}$ if the following three properties are satisfied:
\begin{enumerate}
\item $g=\sum_{t\in \Gamma} g_t.$
\item $supp(g_t)\subset\Omega_t.$
\item $\int_{\Omega_t} g_t\,\varphi=0$, for all $\varphi\in\P_m.$
\end{enumerate}
\end{defi}
We may also refer to this collection of functions by a $\P_m$-{\it decomposition}. 

A {\it covering} $\{\O_t\}_{t\in \Gamma}$ of $\O$ is a countable collection of subdomains of $\O$ that satisfies $\bigcup_t \O_t=\O$ and the following estimate of the overlap:
\begin{eqnarray}\label{overlapping}
\sum_{t\in\Gamma}\chi_{\Omega_t}(x)\leq N \chi_{\Omega}(x),
\end{eqnarray}
for all $x\in\Omega$. This condition is essential in these notes, specifically in Sections \ref{Inequalities} and \ref{John}.

\section{A decomposition and weighted Korn inequality}
\label{Inequalities}
\setcounter{equation}{0}

In this section, we will show that the validity of weighted Korn inequalities on $\O$ (introduced below) can be obtained from the local validity of this inequality if we have an appropriate decomposition of functions in $L^q(\O,\o^{-1})$. No additional assumptions on the domain are required in this section but being bounded. 

Given $U\subseteq\O$, we say that {\it weighted Korn inequality} holds on $U$ if  
\begin{eqnarray}\label{weighted local Korn}
\|D\uu\|_{L^p(U,\o)}\leq C \|\varepsilon(\uu)\|_{L^p(U,\o)}, 
\end{eqnarray}
for any vector field $\uu\in W^{1,p}(U,\o)^n$ that satisfies that $\int_{U}\eta_{ij}(\uu)\,\o^p=0$, for any $1\leq i<j\leq n$. 
There is an equivalent version of inequality (\ref{weighted local Korn}) which says:
\begin{eqnarray}\label{inf weighted local Korn}
\inf_{\varepsilon(\ww)=0}\|D(\vv-\ww)\|_{L^p(U,\o)}\leq C \|\varepsilon(\vv)\|_{L^p(U,\o)}, 
\end{eqnarray}
where the infimum is taken over the kernel of $\varepsilon$ and $\vv$ is an arbitrary vector field in $W^{1,p}(U,\o)^n$. Let us mention that the vector fields that satisfy $\varepsilon(\ww)=0$ are characterized by  
\[\ww(x)=Ax+b,\]
where $A\in\R^{n\times n}$ is a skew-symmetric matrix and $b\in\R^n$. 

The integrability of $\o^p$ will be used several times in this section but it is required in particular to show that  \[L^q(\Omega,\o^{-1})\subset L^1(\O).\] 
Now, given a natural number $m\in\N_0$ and  we denote by  $V_m(\O,\o^{-1})$ (or simply $V_m$) the subspace of $L^q(\O,\o^{-1})$ given by: 
\begin{eqnarray*}
V_{m}:=\{g\in L^q(\Omega,\o^{-1})\,:\, \int g\varphi=0, \forall\varphi\in\P_m,\text{ and }supp(g)\text{ intersects a finite number of }\Omega_t \}.
\end{eqnarray*}
Recall that $\Omega$ is bounded thus the set of polynomial $\P_m$ is contained in $L^\infty(\Omega)$. Then, using that $L^q(\Omega,\o^{-1})\subset L^1(\Omega)$ we have that $V_m$ is well-defined.

\begin{lemma}\label{weighted dense} 
Given $m\in\N_0$ and a covering $\{\O_t\}_{t\in\Gamma}$ of $\O$ such that each $\Omega_t$ intersects a finite number of $\Omega_s$ with $s\in\Gamma$, it follows that the subspace $S_{m}\subset L^q(\Omega,\o^{-1})$ defined by \[S_{m}:=\{g+\o^p\psi\,/\,g\in V_m\text{ and }\psi\in\P_m\}\] is dense in $L^q(\Omega,\o^{-1})$. Moreover, 
$\|g\|_{L^q(\Omega,\o^{-1})}\leq C\|g+\o^p\psi\|_{L^q(\Omega,\o^{-1})}$, where $C$ does not depend on $g$ nor $\psi$. In the particular case when $m=0$ the constant in the previous inequality is equal to $2$.
\end{lemma}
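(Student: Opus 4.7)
The plan is to decompose any $u\in L^q(\Omega,\omega^{-1})$ uniquely as $u = g_u + \omega^p \psi_u$, where $\psi_u \in \P_m$ is singled out by a Gram-matrix identity on the finite-dimensional space $\P_m$, and then to approximate the (not necessarily localized) remainder $g_u$ by elements of $V_m$ through truncation paired with a local polynomial correction supported on one fixed $\Omega_{t_0}$.

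For the decomposition and the estimate, note that since $\omega>0$ a.e.\ and $\omega^p\in L^1(\Omega)$, the form $(\psi,\varphi)\mapsto \int_\Omega \omega^p \psi\varphi$ is a positive-definite inner product on the finite-dimensional $\P_m$. Using $L^q(\Omega,\omega^{-1})\subset L^1(\Omega)$, the functional $\varphi\mapsto \int_\Omega u\varphi$ is well defined and continuous on $\P_m$, so there is a unique $\psi_u\in\P_m$ with $\int_\Omega \omega^p\psi_u\varphi = \int_\Omega u\varphi$ for every $\varphi\in\P_m$; by construction $g_u := u - \omega^p\psi_u$ is orthogonal to $\P_m$. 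Plugging $\varphi=\psi_u$ into this identity and applying H\"older's inequality gives $\int_\Omega\omega^p\psi_u^{\,2} \leq \|u\|_{L^q(\Omega,\omega^{-1})}\|\psi_u\|_{L^p(\Omega,\omega)}$, and then finite-dimensional norm equivalence on $\P_m$ among the three norms $(\int_\Omega\omega^p\psi^2)^{1/2}$, $(\int_\Omega\omega^p|\psi|^p)^{1/p}$, and $(\int_\Omega\omega^p|\psi|^q)^{1/q}=\|\omega^p\psi\|_{L^q(\Omega,\omega^{-1})}$ converts the inequality into $\|\omega^p\psi_u\|_{L^q(\Omega,\omega^{-1})}\leq C'\|u\|_{L^q(\Omega,\omega^{-1})}$; the triangle inequality then yields the desired estimate with $C=1+C'$. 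For $m=0$ the three norms on $\P_0$ reduce to $|c|(\int\omega^p)^{1/2}$, $|c|(\int\omega^p)^{1/p}$ and $|c|(\int\omega^p)^{1/q}$, and the conjugate relation $\frac{1}{p}+\frac{1}{q}=1$ makes the two equivalence constants cancel so that $C'=1$ and the total constant equals $2$.

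For the density, I would fix one element $\Omega_{t_0}$ of the covering---on which $(\psi,\varphi)\mapsto\int_{\Omega_{t_0}}\omega^p\psi\varphi$ is still an inner product on $\P_m$---together with an exhaustion $A_k := \Omega_{t_1}\cup\cdots\cup\Omega_{t_k}\nearrow\Omega$ by finite subunions of the countable covering. Given the remainder $g_u$, define $\sigma_k\in\P_m$ by the local Riesz identity $\int_{\Omega_{t_0}}\omega^p\sigma_k\varphi = \int_{A_k} g_u\varphi$ for all $\varphi\in\P_m$, and set $g_k := g_u\chi_{A_k} - \omega^p\chi_{\Omega_{t_0}}\sigma_k$. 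By construction $g_k$ is orthogonal to $\P_m$ and its support meets only finitely many $\Omega_t$, so $g_k\in V_m$ and $g_k+\omega^p\psi_u\in S_m$. Dominated convergence gives $g_u\chi_{A_k}\to g_u$ in $L^q(\Omega,\omega^{-1})$; meanwhile $\int_{A_k}g_u\varphi\to\int_\Omega g_u\varphi=0$ forces $\sigma_k\to 0$ in the finite-dimensional $\P_m$, so $\omega^p\chi_{\Omega_{t_0}}\sigma_k\to 0$ in $L^q(\Omega,\omega^{-1})$ as well. Therefore $g_k+\omega^p\psi_u\to u$ in $L^q(\Omega,\omega^{-1})$.

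The main obstacle I expect is the density step rather than the estimate. The naive truncation $u\chi_{A_k}$ fails to lie in $V_m$ because it is not orthogonal to $\P_m$, whereas the most obvious remedy---subtracting a global polynomial correction $\omega^p\sigma_k$---destroys the localization of the support that $V_m$ requires. The trick of localizing the correction to a single fixed $\Omega_{t_0}$ while keeping the global polynomial term $\omega^p\psi_u$ outside the approximation (it is carried by the $\P_m$-summand of $S_m$) is what reconciles both constraints simultaneously.
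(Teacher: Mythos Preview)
Your proof is correct and follows essentially the same route as the paper. Both arguments first split $u=h_u+\omega^p\psi_u$ via the inner product $\langle\psi,\varphi\rangle=\int_\Omega\omega^p\psi\varphi$ on $\P_m$, obtain the norm estimate (and the explicit constant $2$ when $m=0$) from this projection, and then approximate $h_u$ by truncating to a set meeting finitely many $\Omega_t$ and adding a localized correction $\omega^p\chi_{(\cdot)}\sigma$ to restore $\P_m$-orthogonality; the only cosmetic differences are that the paper localizes the correction on a small cube $Q\subset\Omega$ while you use one covering element $\Omega_{t_0}$, and the paper phrases the truncation via an $\epsilon$-good set $\Omega_\epsilon$ while you use an explicit exhaustion $A_k\nearrow\Omega$.
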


\begin{proof} Let us remark that $\o^p\psi$ belongs to $L^q(\Omega,\o^{-1})$. Indeed, 
\[\|\o^p\psi\|^q_{L^q(\O,\o^{-1})} = \int_\O\psi^q\o^{pq}\o^{-q}  \leq \|\psi^q\|_{L^\infty(\O)}\|\o^p\|_{L^1(\O)},\]
thus $S_m$ is a subspace of $L^q(\O,\o^{-1})$.  

Notice that any function $F$ in $L^q(\O,\o^{-1})$ can be written as $F=h_F+\o^p\psi_F$, where $h_F$ belongs to $L^q(\O,\o^{-1})$ and satisfies that $\int_{\Omega} h_F\varphi=0$ for all $\varphi\in\P_m$ and $\psi_F$ belongs to $\P_m$. This follows for being $\P_m $ a finite dimensional vector space. Thus, the proof is basically reduced to showing existence of an approximation of $h_F$ in $V_m$ (the support of $h_F$ does not necessarily intersect a finite collection of $\O_t$'s). However, let us go back to show the existence of the representation of functions in $L^q(\O,\o^{-1})$ mentioned above. Let us take an orthonormal basis $\{\psi_i\}_{0\leq i\leq M}$ of $\P_m$, where $M$ is the dimension of $\P_m$, with respect to the inner product 
\[\langle \psi,\varphi\rangle_\O=\int_\O \psi(x)\varphi(x) \o^p(x)\,\d x.\] 
Indeed, the basis satisfies that $\int_\O \psi_i\psi_j \o^p=\delta_{ij}$, where $\delta_{ij}$ is the Kronecker symbol. Thus, $h_F:=F-\o^p\psi_F$ and 
\begin{eqnarray}\label{representation}
\psi_F=:\sum_{j=0}^M \alpha_{F,j}\psi_j,
\end{eqnarray}
 where $\alpha_{F,j}:=\int_\O F\psi_j$ for any $0\leq j\leq M$. Moreover, notice that the coefficients are well-defined and \[|\alpha_{F,j}|\leq \|F\|_{L^q(\Omega,\o^{-1})}\|\psi_j\|_{L^p(\Omega,\o)},\]
for all $j$. In addition, using (\ref{representation}) we have
\begin{eqnarray}\label{weighted dense constant}
\|h_F\|_{L^q(\Omega,\o^{-1})}\leq \left(1+\sum_{j=0}^M \|\psi_j\|_{L^p(\O,\o)} \|\o^p\psi_j\|_{L^q(\O,\o^{-1})}\right)\|F\|_{L^q(\O,\o^{-1})}.
\end{eqnarray}

Now, in order to approximate the component $h_F$ by a function in $V_m$ we are going to need another orthonormal basis. Specifically,  let us take a cube $Q\subset\Omega$ that intersects a finite number of subdomains in $\{\Omega_t\}_{t\in\Gamma}$ and an orthonormal basis $\{\tilde{\psi}_i\}_{0\leq i\leq n}$ of $\P_m$ with respect to this other inner product 
\[\langle \psi,\varphi\rangle_Q=\int_Q \psi(x)\varphi(x) \o^p(x)\,\d x.\] 
Notice that in this case we use $Q$ instead of $\Omega$, however, $\tilde{\psi}_j$ is a polynomial in $\P_m$ and $\int_\Omega h_F\tilde{\psi}_j$ is still equal to zero for all $j$. Now, given $\epsilon>0$, and using that $\Gamma$ is countable and each $\O_t$ intersects a finite number of $\O_s$, let $\Omega_\epsilon\subset \Omega$ be
an open set that contains $Q$, also intersects a finite number of subdomains in $\{\Omega_t\}_{t\in\Gamma}$'s and 
\[\|(1-\chi_{\Omega_\epsilon})h_F\|_{L^q(\O,\o^{-1})}<\epsilon.\]

Thus, we define the function $G=g+\o^p\psi$, with $\psi:=\psi_F$ and 
\begin{eqnarray*}
g(x):=\chi_{\Omega_\epsilon}(x)h_F(x)+\sum_{i=0}^M\chi_Q(x)\o^p(x)\tilde{\psi}_i(x)\int_{\Omega\setminus\Omega_\epsilon}h_F(y)\tilde{\psi}_i(y)\,\d y.
\end{eqnarray*}
Observe that the support of $g$ intersects a finite number of $\O_t$'s, and $\int_\Omega g\tilde{\psi}_j=0$ for all $j$, thus $g\in V_m$. Moreover,
\begin{eqnarray*}
\|F-G\|_{L^q(\O,\o^{-1})}&=&\|h_F-g\|_{L^q(\Omega,\o^{-1})}\\
&\leq& \epsilon+\sum_{i=0}^M\|\chi_Q(x)\o^p(x)\tilde{\psi}_i(x)\int_{\Omega\setminus\Omega_\epsilon}h_F(y)\tilde{\psi}_i(y)\,\d y\|_{L^q(\Omega,\o^{-1})}\\
&\leq& \epsilon+\sum_{i=0}^M\int_{\Omega\setminus\Omega_\epsilon}|h_F(y)\tilde{\psi}_i(y)|\,\d y\|\tilde{\psi}_i\o^p\|_{L^q(Q,\o^{-1})}\\
&\leq& \epsilon\left(1+\sum_{i=0}^M\|\tilde{\psi}_i\|_{L^p(\Omega,\o)}    \|\tilde{\psi}_i\o^p\|_{L^q(Q,\o^{-1})}  \right).
\end{eqnarray*}

Finally, we only have to estimate the norm of $g$ by the norm of $G=g+\o^p\psi$. This representation is unique so we can assume that $g=h_G$ and $\psi=\psi_G$ defined above. Thus, from (\ref{weighted dense constant}) we have 
\begin{eqnarray*}
\|g\|_{L^q(\Omega,\o^{-1})}\leq \left(1+\sum_{j=0}^M \|\psi_j\|_{L^p(\O,\o)} \|\o^p\psi_j\|_{L^q(\O,\o^{-1})}\right)\|g+\o^p\psi\|_{L^q(\O,\o^{-1})}.
\end{eqnarray*}
In the particular case when $m=0$, the space $\P_0$ has dimension equal to 1 and we take the basis given by the vector 
\[\psi_0(x):=\frac{1}{(\o^p(\O))^{1/2}}\chi_\O(x),\]
where $\o^p(\O):=\int_\O \o^p$. Thus, $\|\psi_0\|_{L^p(\O,\o)} \|\o^p\psi_0\|_{L^q(\O,\o^{-1})}=1$ obtaining a constant that equals $2$. 
\end{proof}

The following is the main result of the section.

\begin{theorem}\label{Korn} If weighted Korn inequality (\ref{inf weighted local Korn}) is valid on $\O_t$, with an uniform constant $C_1$ for all $t\in\Gamma$, and there exists a $\P_0$-orthogonal  decomposition of any function $g$ in $V_0(\O,\o^{-1})$ subordinate to $\{\Omega_t\}_{t\in\Gamma}$, with the estimate
\begin{eqnarray*}
\sum_{t\in \Gamma} \|g_t\|^q_{L^q(\O_t,\o^{-1})}\leq C^q_0\|g\|^q_{L^q(\O,\o^{-1})},
\end{eqnarray*}
then, weighted Korn inequality (\ref{weighted local Korn}) is valid in $\Omega$. Namely, there exist a constant $C$ such that  
\begin{eqnarray}\label{optimal Korn}
\|D\uu\|_{L^p(\O,\o)}\leq C \|\varepsilon(\uu)\|_{L^p(\O,\o)}
\end{eqnarray}
is valid for any arbitrary vector field $\uu\in W^{1,p}(\O,\o)^n$, with $\int_{\Omega}\eta_{ij}(\uu)\,\o^p=0$ for $1\leq i<j\leq n$.
\end{theorem}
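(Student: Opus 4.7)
The plan is to prove the inequality by duality, reducing matters to testing $\eta_{ij}(\uu)$ against functions in $L^q(\Omega,\o^{-1})$ and then exploiting both the hypothesized $\P_0$-decomposition and the local Korn inequality. Since $D\uu=\varepsilon(\uu)+\eta(\uu)$ pointwise, it suffices to estimate each component $\|\eta_{ij}(\uu)\|_{L^p(\O,\o)}$ for $1\leq i<j\leq n$. By the elementary pairing $\left|\int fG\,\d x\right|\leq \|f\|_{L^p(\O,\o)}\|G\|_{L^q(\O,\o^{-1})}$, we may write
\[
\|\eta_{ij}(\uu)\|_{L^p(\O,\o)}=\sup_{\|G\|_{L^q(\O,\o^{-1})}\leq 1}\int_\O \eta_{ij}(\uu)\,G\,\d x.
\]

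By Lemma \ref{weighted dense} applied with $m=0$, the subspace $S_0$ is dense in $L^q(\O,\o^{-1})$, so it is enough to take the supremum over test functions of the form $G=g+\o^p\psi$ with $g\in V_0$ and $\psi\in\P_0$ a constant. Because $\int_\O \eta_{ij}(\uu)\,\o^p=0$ by hypothesis, the $\o^p\psi$ component contributes nothing to the pairing, and the problem reduces to bounding $\int_\O\eta_{ij}(\uu)\,g\,\d x$ for $g\in V_0$, while remembering that $\|g\|_{L^q(\O,\o^{-1})}\leq 2\|G\|_{L^q(\O,\o^{-1})}$ by the same lemma.

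Next I would apply the hypothesized decomposition $g=\sum_{t\in\Gamma} g_t$ with $\mathrm{supp}(g_t)\subset\O_t$ and $\int_{\O_t} g_t=0$. For each $t$, the local Korn inequality in its infimum form (\ref{inf weighted local Korn}) furnishes a rigid motion $\ww_t(x)=A_t x+b_t$, with $A_t$ skew-symmetric, such that
\[
\|D(\uu-\ww_t)\|_{L^p(\O_t,\o)}\leq C_1\|\varepsilon(\uu)\|_{L^p(\O_t,\o)}.
\]
Since $\eta_{ij}(\ww_t)=(A_t)_{ij}$ is a constant and $g_t$ has vanishing integral over $\O_t$, one can substitute $\eta_{ij}(\uu)$ by $\eta_{ij}(\uu-\ww_t)$ inside the integral over $\O_t$ without changing its value. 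Hölder's inequality together with the pointwise bound $|\eta_{ij}(\uu-\ww_t)|\leq |D(\uu-\ww_t)|$ then yields
\[
\Bigl|\int_{\O_t}\eta_{ij}(\uu)\,g_t\,\d x\Bigr|\leq C_1\|\varepsilon(\uu)\|_{L^p(\O_t,\o)}\|g_t\|_{L^q(\O_t,\o^{-1})}.
\]

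Summing over $t$ and applying discrete Hölder in $\ell^p\times\ell^q$, I combine the overlap estimate (\ref{overlapping})
\[
\sum_t\|\varepsilon(\uu)\|_{L^p(\O_t,\o)}^p\leq N\|\varepsilon(\uu)\|_{L^p(\O,\o)}^p
\]
with the hypothesized decomposition bound $\sum_t\|g_t\|_{L^q(\O_t,\o^{-1})}^q\leq C_0^q\|g\|_{L^q(\O,\o^{-1})}^q$ to obtain, after returning to $G$ via the factor $2$ from Lemma \ref{weighted dense},
\[
\Bigl|\int_\O\eta_{ij}(\uu)\,G\,\d x\Bigr|\leq 2C_1 N^{1/p}C_0\|\varepsilon(\uu)\|_{L^p(\O,\o)}\|G\|_{L^q(\O,\o^{-1})}.
\]
Taking the supremum over $G$ bounds $\|\eta_{ij}(\uu)\|_{L^p(\O,\o)}$, and adding the trivial $\|\varepsilon(\uu)\|_{L^p(\O,\o)}$ term yields (\ref{optimal Korn}). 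The only delicate point is the reduction via the density lemma, ensuring that the cancellation $\int_\O\eta_{ij}(\uu)\,\o^p=0$ lets us replace an arbitrary $G\in L^q(\O,\o^{-1})$ by an element of $V_0$; everything else is a straightforward combination of local Korn, the zero-mean property of $g_t$, and Hölder.
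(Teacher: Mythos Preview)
Your proof is correct and follows essentially the same route as the paper's own argument: duality via the density Lemma~\ref{weighted dense} to reduce to test functions in $S_0$, cancellation of the $\o^p\psi$ part by the mean-zero condition on $\eta_{ij}(\uu)$, the $\P_0$-decomposition of $g$, subtraction of the constant $\eta_{ij}(\ww_t)$ thanks to $\int_{\O_t}g_t=0$, then H\"older, local Korn, discrete H\"older, the overlap bound, and the factor $2$ from the $m=0$ case of Lemma~\ref{weighted dense}. The paper phrases the local step as $\inf_{\alpha\in\P_0}\|\eta_{ij}(\uu)-\alpha\|_{L^p(\O_t,\o)}\leq C_1\|\varepsilon(\uu)\|_{L^p(\O_t,\o)}$ rather than explicitly picking a rigid motion $\ww_t$, but this is only a cosmetic difference; it also notes explicitly that the sum $\sum_t\int_{\O_t}\eta_{ij}(\uu)g_t$ is finite because $g\in V_0$, a point you use implicitly.
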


\begin{proof} The differential matrix of $\uu$ can be written as the sum of its symmetric part $\varepsilon(\uu)$ and its skew-symmetric part $\eta(\uu)$. Thus, in order to prove the theorem, it is necessary and sufficient to show that $\|\eta_{ij}(\uu)\|_{L^p(\O,\o)}\leq C \|\varepsilon(\uu)\|_{L^p(\O,\o)}$, for $1\leq i<j\leq n$. 

Now, given $t\in \Gamma$, we have 
\begin{eqnarray}\label{inf2 weighted local Korn}
\inf_{\alpha\in\P_0}\|\eta_{ij}(\uu)-\alpha\|_{L^p(\O_t,\o)}\leq C_1 \|\varepsilon(\uu)\|_{L^p(\O_t,\o)},
\end{eqnarray}
for any $1\leq i<j\leq n$, where $C_1$ is independent of $t$.

Let $g+\o^p\psi$ be an arbitrary function in $S_0$, with $\|g+\o^p\psi\|_{L^q(\O,\o^{-1})}\leq 1$. The function $\psi$ is simply a constant.  Thus, using that $\int_\O\eta_{ij}(\uu)\,\o^p=0$ and the existence of the $\P_0$-orthogonal decomposition we have
\begin{eqnarray*}
\int_{\Omega} \eta_{ij}(\uu)(g+\o^p\psi)&=&\int_{\Omega} \eta_{ij}(\uu)g=\int_{\Omega} \eta_{ij}(\uu)\sum_{t\in\Gamma} g_t\\
&=&\sum_{t\in\Gamma} \int_{\Omega_t} \eta_{ij}(\uu)g_t=\sum_{t\in\Gamma} \int_{\Omega_t} (\eta_{ij}(\uu)-\alpha)g_t=(I),
\end{eqnarray*}
for any arbitrary $\alpha\in\P_0$. Observe that the sum in the previous lines is finite as $g$ is a function in $V_0$.
Next, using H\"older inequality in $(I)$, the inequality (\ref{inf2 weighted local Korn}) on each $\Omega_t$ and, finally,  H\"older inequality for the sum, we obtain 
\begin{eqnarray*}
(I)&\leq&\sum_{t\in\Gamma} \inf_{\alpha\in\P_0} \|(\eta_{ij}(\uu)-\alpha)\|_{L^p(\O_t,\o)}\|g_t\|_{L^q(\O_t,\o^{-1})}\\
&\leq&\sum_{t\in\Gamma} C_1\|\varepsilon(\uu)\|_{L^p(\O_t,\o)}\|g_t\|_{L^{q}(\O_t,\o^{-1})}\\ 
&\leq&C_1 \left(\sum_{t\in\Gamma} \int_{\O_t} |\varepsilon(\uu)|^p\o^p\right)^{1/p}\left(\sum_{t\in\Gamma} \|g_t\|_{L^{q}(\O_t,\o^{-1})}^q\right)^{1/{q}}=(II).
\end{eqnarray*}
Now, we use the estimate of the decomposition given in the statement of the theorem, the estimate of the overlap of $\{\Omega_t\}_t$ and the estimate of the constant in Lemma \ref{weighted dense} 
\begin{eqnarray*}
(II)&\leq&C_1N^{1/p}\,C_0\,\|\varepsilon(\uu)\|_{L^p(\O,\o)} \|g\|_{L^q(\O,\o^{-1})}\\ 
&\leq&2C_1 N^{1/p}\,C_0\,\|\varepsilon(\uu)\|_{L^p(\O,\o)}.
\end{eqnarray*}

Finally, as $S_0$ is dense in $L^q(\Omega,\o^{-1})$, taking the supremum over all the functions $g+\o^p\psi$ in $S_0$ with $\|g+\o^p\psi\|_{L^q(\O,\o^{-1})}\leq 1$ we conclude 
\begin{eqnarray*}
\|\eta_{ij}(\uu)\|_{L^p(\O,\o)}&=&\sup_{g+\o^p\psi}\int_{\O} \eta_{ij}(\uu)(g+\o^p\psi)\leq 2N^{1/p}\,C_0\,C_1\,\|\varepsilon(\uu)\|_{L^p(\O,\o)}.
\end{eqnarray*}
Thus, 
\begin{eqnarray*}
\|D\uu\|_{L^p(\O,\o)}&\leq& \|\varepsilon(\uu)\|_{L^p(\O,\o)}+\|\eta(\uu)\|_{L^p(\O,\o)}\\
&\leq&\left(1+2n^{2/p}N^{1/p}\,C_0\,C_1\right) \|\varepsilon(\uu)\|_{L^p(\O,\o)}
\end{eqnarray*}
proving that weighted Korn inequality is valid on $\Omega$.
\end{proof}

\begin{remark}\label{estimate}
Notice that the proof of Theorem \ref{Korn} also gives an explicit constant for weighted Korn inequality (\ref{optimal Korn}) on $\Omega$. Indeed, we can take  
\[C= 1+2n^{2/p}N^{1/p}\,C_0\,C_1,\]
where $C_1$ is a uniform constant for weighed Korn inequality on each subdomain $\Omega_t$, $C_0$ is the constant involved in the estimation of the $\P_0$-decomposition and $N$ controls the overlap. 
\end{remark}

\section{A \texorpdfstring{$\P_0$}{P0}-decomposition on general domains}
\label{Decomposition on general domains}
\setcounter{equation}{0}

In this section we show the existence of a $\P_0$-decompositions subordinate to a covering $\{\O_t\}_{t\in\Gamma}$ of $\O$ if we have certain order on $\Gamma$. The construction of the $\P_0$-decomposition follows the ideas appearing in \cite{L}, where this kind of techniques involving decomposition of functions was used to prove the solvability in weighted Sobolev spaces of the equation ${\rm div}\,\uu=f$ on some irregular domains.

Let us denote by $G=(V,E)$ a graph with vertices $V$ and edges $E$. Graphs in these notes do not have neither multiple edges nor loops and the number of vertices in $V$ is at most countable. A {\it rooted tree} (or simply a tree) is a connected graph $G=(\Gamma,V)$ in which any two vertices are connected by exactly one simple path, and a {\it root} is simply a distinguished vertex $a\in\Gamma$. The set of vertices of a tree will be usually denoted by $\Gamma$ and we may say that $\Gamma$ has a rooted tree structure without specifying the set of edges $E$.  Moreover, if $G=(\Gamma,E)$ is a rooted tree, it is possible to define a {\it partial order} ``$\preceq$" in $\Gamma$ as follows: $s\preceq t$ if and only if the unique path connecting $t$ with the root $a$ passes through $s$. The {\it height} or {\it level} of any $t\in \Gamma$ is the number of vertices in $\{s\in\Gamma\,:\,s\preceq t\text{ with }s\neq t\}$. {\it The parent} of a vertex $t\in \Gamma$ is the vertex $s$ satisfying that $s\preceq t$ and its height is one unit smaller than the height of $t$. We denote the parent of $t$ by $t_p$. 
It can be seen that each $t\in\Gamma$ different from the root has a unique parent, but several elements on $\Gamma$ could have the same parent. Note that two vertices are connected by an edge ({\it adjacent vertices}) if one is the the parent of the other.

\begin{defi} \label{Decomp of Omega}
Let $\Omega\subset\R^n$ be a bounded domain and $\{\Omega_t\}_{t\in\Gamma}$ a covering of $\Omega$. We say that $\{\Omega_t\}_{t\in\Gamma}$ is a {\it tree covering} of $\Omega$ if $\Gamma$ is the set of vertices of a rooted tree, with root $a\in\Gamma$, such that for any $t\in \Gamma$, with $t\neq a$, there exists an open cube $B_t\subseteq\O_{t}\cap\O_{t_p}$ where the collection $\{B_t\}_{t\neq a}$ is pairwise disjoint. 
\end{defi}

The tree structure on $\Gamma$ gives a certain notion of geometry to $\O$. We are interested in graph structures which are consistent with the geometry that we already have in $\O$. In Section \ref{John}, we will show the existence of an appropriate tree structure on $\Gamma$, where $\{\O_t\}_{t\in\Gamma}$ is a dilation of a Whitney decomposition of a John domain $\O$. Similar constructions have been developed in \cite{L} for H\"older-$\alpha$ domains and other examples.
 
\begin{defi}\label{T and W} 
Given a tree covering $\{\O_t\}_{t\in\Gamma}$ of $\O$ we define {\it the Hardy type operator} $T$ as follows:
\begin{eqnarray}\label{Ttree}
Tg(x):=\sum_{a\neq t\in\Gamma}\dfrac{\chi_{t}(x)}{|W_t|}\int_{W_t}|g|,
\end{eqnarray} 
where $\displaystyle{W_t=\bigcup_{s\succeq t} \O_s}$ and $\chi_t$ is the characteristic function of $B_t$ for all $t\neq a$. 
\end{defi}
We may refer to $W_t$ by {\it the shadow of }$\Omega_t$.

The next lemma is a fundamental result that proves the continuity of the operator $T$. This result was shown in \cite{L} (Lemma 3.1).

\begin{lemma}\label{Ttreecont} The operator $T:L^q(\O)\to L^q(\O)$ defined in (\ref{Ttree}) is continuous for any $1<q< \infty$. Moreover, its norm is bounded by 
\[\|T\|_{L^q\to L^q} \leq 2\left(\dfrac{qN}{q-1}\right)^{1/q}.\]
\end{lemma}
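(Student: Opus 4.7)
The approach is to prove the $L^q$ boundedness of $T$ by duality. I would pass to the adjoint $T^*\colon L^p(\O)\to L^p(\O)$ with $1/p+1/q=1$, and then reduce the resulting tree-indexed Hardy-type sum to a classical one-dimensional weighted Hardy inequality along the root-to-leaf chains of $\Gamma$, paying an extra factor of $N^{1/q}$ for the overlap of the covering.

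First, since $\{B_t\}_{t\neq a}$ is pairwise disjoint, for each $x\in\O$ at most one term in the defining sum of $Tg(x)$ is nonzero, so $Tg$ is a piecewise average. By standard $L^q$--$L^p$ duality,
\[
\|Tg\|_{L^q(\O)}=\sup_{h\ge 0,\,\|h\|_{L^p(\O)}\le 1}\int_\O Tg\cdot h\,\d x,
\]
and Fubini in the tree variable $t$ rewrites the pairing as $\int_\O |g(y)|\,T^*h(y)\,\d y$ with
\[
T^*h(y):=\sum_{a\neq t,\; y\in W_t}\frac{1}{|W_t|}\int_{B_t}h.
\]
Thus by H\"older the lemma reduces to showing $\|T^*h\|_{L^p(\O)}\le 2(qN/(q-1))^{1/q}\|h\|_{L^p(\O)}$.

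Second, I would reduce the tree-indexed sum defining $T^*h$ to a union of chains. For fixed $y\in\O$, the condition $y\in W_t$ is equivalent to $t\preceq s$ for some $s$ with $y\in\O_s$; writing $S(y):=\{s\in\Gamma:y\in\O_s\}$, condition (\ref{overlapping}) gives $|S(y)|\le N$, so
\[
T^*h(y)\le\sum_{s\in S(y)}A_s(h),\qquad A_s(h):=\sum_{a\neq t\preceq s}\frac{\int_{B_t}h}{|W_t|}.
\]
Applying $(\sum_{i=1}^N a_i)^p\le N^{p-1}\sum a_i^p$ and integrating yields
\[
\|T^*h\|_{L^p(\O)}^p\le N^{p-1}\sum_{s\in\Gamma}|\O_s|\,A_s(h)^p.
\]
Along each chain $\{t\preceq s\}$ the shadows $\{W_t\}$ are nested and $|W_t|$ decreases monotonically away from the root, while the disjoint cubes $B_t$ sit inside $W_{t_p}$. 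This is exactly the setting of a discrete weighted Hardy inequality with sharp constant $q/(q-1)$: applying it to each $A_s(h)^p$ and summing over $s$ (using the overlap condition once more to fold the resulting $\sum_s|\O_s|(\cdots)$ back into $\|h\|_{L^p(\O)}^p$) produces the prefactor $(qN/(q-1))^{1/q}$; the remaining factor of $2$ appears in the doubling step needed to compare $|W_t|$ with the increment $|W_{t_p}|\setminus W_t$ when extracting the sharp Hardy constant.

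The main obstacle is precisely this last step: I have to identify the correct discrete Hardy inequality on the chain so that the constant is exactly $q/(q-1)$, and then coordinate the two uses of the overlap condition (once to split the indexing set into at most $N$ chains, once to reassemble the $L^p$-norm) so that the final prefactor is $N^{1/q}$ rather than a larger power. The pairwise disjointness of the $B_t$'s and the inclusion $B_t\subseteq\O_t\cap\O_{t_p}$ from Definition \ref{Decomp of Omega} are both used decisively here, as they ensure that the increments $|W_{t_p}|-|W_t|$ control (up to overlap) the discrete weights that appear in the Hardy step.
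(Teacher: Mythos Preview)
The paper does not actually prove this lemma here; it is quoted from \cite{L}, Lemma~3.1. The argument there is \emph{not} by duality plus a Hardy inequality, but by real interpolation. One first checks that $T$ is bounded on $L^\infty(\O)$ with constant $1$, since every value of $Tg$ is an average of $|g|$ over some $W_t$. Then one shows that $T$ is of weak type $(1,1)$ with constant $N$: writing $A_t:=|W_t|^{-1}\int_{W_t}|g|$ and letting $M_\lambda$ be the set of $\preceq$-minimal indices with $A_t>\lambda$, every $B_t$ with $A_t>\lambda$ lies in some $W_{t'}$ with $t'\in M_\lambda$, the family $\{W_{t'}\}_{t'\in M_\lambda}$ has overlap at most $N$ (because $M_\lambda$ is an antichain and (\ref{overlapping}) holds), and hence
\[
|\{Tg>\lambda\}|\le\sum_{t'\in M_\lambda}|W_{t'}|<\frac{1}{\lambda}\sum_{t'\in M_\lambda}\int_{W_{t'}}|g|\le\frac{N}{\lambda}\,\|g\|_{L^1(\O)}.
\]
Marcinkiewicz interpolation between the weak $(1,1)$ bound with constant $N$ and the strong $(\infty,\infty)$ bound with constant $1$ then yields precisely $\|T\|_{L^q\to L^q}\le 2\bigl(qN/(q-1)\bigr)^{1/q}$; this is the source of both the factor $2$ and the exponent $1/q$.

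Your outline via duality is a reasonable alternative starting point, and the reduction to bounding $\sum_{s}|\O_s|\,A_s(h)^p$ with an $N^{1/q}$ already extracted is correct. The gap is the step you yourself flag as ``the main obstacle'': you invoke a discrete weighted Hardy inequality along chains with sharp constant $q/(q-1)$ without stating which inequality this is or checking its hypotheses for the specific weights $|\O_s|$, $|W_t|$, $|B_t|$. On a branching tree the sum $\sum_s|\O_s|A_s(h)^p$ cannot be split into independent root-to-leaf chains without each $b_t$ being reused in every chain through $t$, so a one-dimensional Hardy inequality does not apply directly; what is really needed is a two-weight Hardy inequality on the tree, and verifying its Muckenhoupt-type condition with the exact constant $2(q/(q-1))^{1/q}$ is precisely the work that is left undone. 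Your account of the factor $2$ (comparing $|W_t|$ with $|W_{t_p}\setminus W_t|$) is also off: in the interpolation argument the $2$ is simply the standard splitting constant from Marcinkiewicz, not a doubling of shadows.
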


It is well-known that the Hardy-Littlewood maximal operator plays an important role in the theory of singular integral operators in weighted spaces. This Hardy type operator plays a similar role when we want to define decompositions of functions in weighted spaces. Another article where Hardy operators have been used to prove weighted Korn inequality is \cite{AO}, where the authors deal with domains which have an external cusp.

\begin{theorem}\label{Decomp} Let $\Omega\subset\R^n$ be a bounded domain with a tree covering $\{\O_t\}_{t\in \Gamma}$. Given $g\in L^1(\Omega)$ such that $\int_\O g=0$ and $supp(g)\cap \O_s\neq \emptyset$ for a finite number of $s\in\Gamma$, there exists $\{g_t\}_{t\in\Gamma}$, a  $\P_0$-decompositions of $g$ subordinate to $\{\Omega_t\}_{t\in\Gamma}$ (see Definition \ref{Definition decomposition}).

Moreover, let us take $t\in\Gamma$. If $x\in B_s$ where $s=t$ or $s_p=t$ then 
\begin{eqnarray}\label{P02}
|g_t(x)|\leq |g(x)|+\tfrac{|W_s|}{|B_s|}Tg(x),
\end{eqnarray}
where $W_t$ denotes the shadow of $\Omega_t$ previously defined. Otherwise
\begin{eqnarray}\label{P01}
|g_t(x)|\leq |g(x)|.
\end{eqnarray}
\end{theorem}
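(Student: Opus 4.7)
The plan is to construct the decomposition explicitly, by localising $g$ via a partition of unity and then transporting the resulting local masses through the cubes $B_t$ along the edges of the tree.

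First I fix a measurable partition of unity $\{\tau_t\}_{t\in\Gamma}$ subordinate to the covering, with $0\leq\tau_t\leq \chi_{\O_t}$ and $\sum_t\tau_t=\chi_\O$, and set $\tilde g_t:=g\tau_t$, $\lambda_t:=\int_\O \tilde g_t$. The support hypothesis on $g$ makes $\{t:\tilde g_t\not\equiv 0\}$ finite, and $\int_\O g=0$ gives $\sum_t\lambda_t=0$. For every $t\neq a$ define the ``flux'' across the edge joining $t$ to its parent by
\[c_t:=\sum_{s\succeq t}\lambda_s,\]
and let $\phi_t:=\chi_{B_t}/|B_t|$; set $c_a:=0$ by convention. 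Finally, put
\[g_t:=\tilde g_t-c_t\phi_t+\sum_{s:\,s_p=t}c_s\phi_s.\]

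With this choice, property (1) of Definition \ref{Definition decomposition} follows by telescoping: for each $t\neq a$ the term $c_t\phi_t$ appears with a minus sign in $g_t$ and with a plus sign in $g_{t_p}$, hence $\sum_tg_t=\sum_t\tilde g_t=g$. Property (2) is immediate since $B_t\subseteq\O_t$ by assumption and $B_s\subseteq\O_{s_p}=\O_t$ whenever $s_p=t$, so $supp(g_t)\subset\O_t$. For property (3) I would use the identity $c_t=\lambda_t+\sum_{s:\,s_p=t}c_s$, which follows from the definition of $c_t$ together with the fact that every $r\succ t$ belongs to the subtree of exactly one child of $t$; integrating $g_t$ then gives $\lambda_t-c_t+\sum_{s:s_p=t}c_s=0$.

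For the pointwise bounds I would first note that $|c_t|\leq\sum_{s\succeq t}|\lambda_s|\leq\int_\O|g|\sum_{s\succeq t}\tau_s\leq\int_{W_t}|g|$, because $\sum_{s\succeq t}\tau_s\leq\chi_{W_t}$. Since the cubes $\{B_r\}_{r\neq a}$ are pairwise disjoint, at a fixed $x$ at most one of the terms $c_t\phi_t$, $c_s\phi_s$ appearing in $g_t$ is nonzero. If $x\in B_s$ with $s=t$ or $s_p=t$, then
\[|g_t(x)|\leq |\tilde g_t(x)|+\frac{|c_s|}{|B_s|}\leq |g(x)|+\frac{|W_s|}{|B_s|}\cdot\frac{1}{|W_s|}\int_{W_s}|g|\leq |g(x)|+\frac{|W_s|}{|B_s|}Tg(x),\]
where the last step uses that $\chi_s(x)=1$ in the defining sum (\ref{Ttree}) for $Tg$. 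In every other case the correction terms vanish at $x$, so $|g_t(x)|=|\tilde g_t(x)|\leq|g(x)|$, which is exactly (\ref{P01}).

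The main obstacle is purely combinatorial rather than analytic: it is the identity $c_t=\lambda_t+\sum_{s:\,s_p=t}c_s$, which requires that every descendant of $t$ lies in the subtree of a \emph{unique} child of $t$ and thus uses the tree (not merely graph) structure of $\Gamma$ essentially. The finiteness of the support of $g$ removes any convergence issues, so the whole construction is finite combinatorics on the subtree where $g$ lives, together with one triangle inequality to obtain (\ref{P02}).
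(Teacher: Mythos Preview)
Your construction is exactly the one in the paper: with $\tilde g_t$ playing the role of the paper's $f_t$ and $c_s\phi_s$ the role of the paper's $h_s$ (note $\int_{W_s}\sum_{k\succeq s}f_k=\sum_{k\succeq s}\lambda_k=c_s$), your formula $g_t=\tilde g_t-c_t\phi_t+\sum_{s:\,s_p=t}c_s\phi_s$ coincides with the paper's definition, and your verifications of (1)--(3) and of the pointwise bounds (\ref{P02})--(\ref{P01}) are the same arguments, only organised more concisely. The proposal is correct.
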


\begin{proof}
Let $\{\phi_t\}_{t\in\Gamma}$ be a partition of the unity subordinate to $\{\O_t\}_{t\in\Gamma}$. Namely, a collection of smooth functions such that $\sum_{t\in\Gamma} \phi_t=1$, $0\leq \phi_t\leq1$ and $supp(\phi_t)\subset\Omega_t$. Thus, $g$ can be cut-off into $g=\sum_{t\in\Gamma} f_t$ by taking $f_t=g\phi_t$. This decomposition verifies (1) and (2) in Definition \ref{Definition decomposition} but (3) may not be satisfied. Thus, we will make some modifications to obtain the orthogonality with respect to $\P_0$.

\bs

The new collection of cutting functions that preserves the orthogonality of $g$ with respect to $\P_0$ is $\{g_t\}_{t\in\Gamma}$, which is defined by
\begin{eqnarray}\label{P0-decomposition}
g_t(x):=f_t(x)+\left(\sum_{s:\,s_p=t}h_s(x)\right)-h_t(x), 
\end{eqnarray}
where 
\begin{eqnarray}\label{h0}
h_s(x):=\dfrac{\chi_{s}(x)}{|B_{s}|}\int_{W_{s}}\sum_{k\succeq s}f_k. 
\end{eqnarray}

We denote by $\chi_t$ the characteristic function of $B_t$.
The sum in (\ref{P0-decomposition}) is indexed over every $s\in\Gamma$ such that $t$ is the parent of $s$. In the particular case when $t$ is the root of $\Gamma$,  (\ref{P0-decomposition}) means 
\[g_a(x)=g_a(x)+\sum_{s:\,s_p=a}h_s(x).\]

Note that the functions $h_s$ in (\ref{h0}) are well-defined because of the integrability of $g$. Moreover, $h_s\not\equiv 0$ only if $f_t\not\equiv 0$ for some $a\preceq s\preceq t$. Thus, $h_s\not\equiv 0$ for a finite number of $s\in\Gamma$. In addition, we have the following immediate properties
\[supp(h_s)\subset B_s\]
\begin{eqnarray}\label{TP0}
|h_s(x)|\leq \tfrac{|W_s|}{|B_s|}\chi_s(x) Tg(x)\text{ for all }x\in\Omega.
\end{eqnarray}
Next, using (\ref{TP0}) we can conclude that $|g_t(x)|\leq |g(x)|+ \tfrac{|W_s|}{|B_s|}Tg(x)$, for any $x\in B_s$ with $s=t$ or $s_p=t$ and $|g_t(x)|\leq |g(x)|$ otherwise, proving (\ref{P02}) and (\ref{P01}).

Let us continue by showing that $g(x)=\sum_{t\in\Gamma} g_t(x)$ for all $x$. Take $x\in \Omega\setminus \bigcup_{k\in\Gamma} B_k$, then $g_t(x)=f_t(x)$, for all $t\in\Gamma$, and \[\sum_{t\in\Gamma} g_t(x)=\sum_{t\in\Gamma} f_t(x)=g(x).\]
Otherwise, if $x$ belongs to $B_{\tilde{k}}$ for $\tilde{k}\in\Gamma$, it can be observed that $g_t(x)=f_t(x)$ for all $t$ such that  $t\neq \tilde{k}$ and $t\neq \tilde{k}_p$. We are using that the cubes $B_s$ are pairwise disjoint. Moreover,  
\begin{eqnarray*}
g_{\tilde{k}}(x)&=&f_{\tilde{k}}(x)-h_{\tilde{k}}(x)\\
g_{\tilde{k}_p}(x)&=&f_{\tilde{k}_p}(x)+h_{\tilde{k}}(x).
\end{eqnarray*}
Then, $\sum_{t\in\Gamma} g_t(x)=g(x)$ for all $x$.

The second property in definition \ref{Definition decomposition} follows by observing that the parent of each $s$ in (\ref{P0-decomposition}) is $t$, then $B_s\subseteq \Omega_s\cap\Omega_t$. Thus,  $supp(g_t)\subseteq\Omega_t$.

Finally, in order to prove that $g_t$ is orthogonal to $\P_0$ for all $t\in\Gamma$ observe that $k\succeq t$ if and only if $k\succeq s$, with $s_p=t$, or $k=t$. Thus, 
\begin{eqnarray*}
\int h_t&=&\int_{W_s}\sum_{k\succeq t}f_k=\int_{\Omega_t}f_t\ +\sum_{s:\,s_p=t}\int_{W_s}\sum_{k\succeq s}f_k\\
&=&\int_{\Omega_t}f_t\ +\sum_{s:\,s_p=t}\int h_s.
\end{eqnarray*}
Then, $\int g_t=0$ for all $t\neq a$. Finally, $\int g_a=\int g=0$. 

\end{proof}

\section{Korn inequality and more on John domains}
\setcounter{equation}{0}
\label{John}

Let $\Omega\subset\R^n$ be a bounded John domain. In the first part of the section, and in order to use the results stated in Section \ref{Inequalities} and Section \ref{Decomposition on general domains}, we will show that there exists a tree covering $\{\O_t\}_{t\in\Gamma}$ of $\O$ for which it is possible to estimate the ratio $\tfrac{|W_t|}{|B_t|}$ for any $t\in\Gamma\setminus\{a\}$. This covering also satisfies (\ref{overlapping}) and that each $\O_t$ intersects a finite number of $\O_s$, with $s$ in $\Gamma$.

A Whitney decomposition of $\O$ is a collection $\{Q_t\}_{t\in\Gamma}$ of closed dyadic cubes whose interiors are pairwise disjoint, which verifies
\begin{enumerate}
\item $\O=\bigcup_{t\in\Gamma}Q_t$,
\item $\text{diam}(Q_t) \leq \rho(Q_t,\partial\Omega) \leq 4\text{diam}(Q_t)$,
\item $\frac{1}{4}\text{diam}(Q_s)\leq \text{diam}(Q_t)\leq 4\text{diam}(Q_s)$, if $Q_s\cap Q_t\neq \emptyset$.
\end{enumerate}
Two different cubes $Q_s$ and $Q_t$ with $Q_s\cap Q_t\neq \emptyset$ are called {\it neighbors}. Notice that two neighbors may have an intersection with dimension less than $n-1$. For instance, they could be intersecting each other in a one-point set. We say that $Q_s$ and $Q_t$ are \mbox{$(n-1)$}-neighbors if $Q_s\cap Q_t$ is a $n-1$ dimensional face.
This kind of covering exists for any proper open set in $\R^n$ (see \cite{S} for details). Moreover, each cube $Q_t$ has no more than $12^n$ neighbors. And, if we fix $0<\epsilon<\frac{1}{4}$ and define $Q_t^*$ as the cube with the same center as $Q_t$ and side length $(1+\epsilon)$ times the side length of $Q_t$, then, $Q_t^*$ touches $Q^*_s$ if and only if  $Q_t$ and $Q_s$ are neighbors. Thus, each expanded cube has no more than $12^n$ neighbors and $\sum_{t\in\Gamma}\chi_{Q_t^*}(x)\leq 12^n$.

\begin{defi}\label{Boman chain condition} A bounded domain $\O\subset\R^n$ is said to satisfy the {\it Boman chain condition} if there exists a Whitney decomposition $\{Q_t\}_{t\in\Gamma}$ of $\O$, with a distinguished cube $Q_a$, and $\lambda>1$ such that for any cube $Q_t$, with $t\in\Gamma$, there is a chain of cubes pairwise different $Q_{t,0},Q_{t,1},\cdots,Q_{t,\kappa}$ such that $Q_{t,0}=Q_t$, $Q_{t,\kappa}=Q_a$ and 
\begin{eqnarray}\label{Boman} 
Q_{t,i}\subseteq \lambda Q_{t,j},
\end{eqnarray}
for all $0\leq i\leq j\leq \kappa$, where $\kappa=\kappa(t)$.

Moreover, two consecutive cubes $Q_{t,i-1}$ and $Q_{t,i}$ in this chain are \mbox{$(n-1)$}-neighbors.
\end{defi}

This kind of conditions were first introduced by Boman in \cite{B}. Later, Buckley et al. proved in \cite{BKL}, in a very general context, that the condition introduced by Boman characterizes John domains. The formulation in Definition \ref{Boman chain condition} is slightly different from the one in \cite{BKL}, as we use that (\ref{Boman}) mis valid for all $0\leq i \leq j$, and not just $i=0$ as in \cite{BKL}. Thus, to prove that any bounded Jonh domain verifies this definition we use Theorem 3.8 in \cite{DRS}. 

\begin{lemma}\label{Existence of a chain} Any bounded John domain $\O\subset\R^n$ satisfies the Boman chain condition in Definition \ref{Boman chain condition}.
\end{lemma}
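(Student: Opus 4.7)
The plan is to realize the Boman chain by following a John curve through the Whitney decomposition. Fix a Whitney decomposition $\{Q_t\}_{t\in\Gamma}$ of $\Omega$, let $x_0$ be the John center, and designate as $Q_a$ the Whitney cube containing $x_0$. For an arbitrary cube $Q_t$, pick a point $y\in Q_t$ and a John curve $\gamma:[0,l]\to\Omega$ with $\gamma(0)=y$ and $\gamma(l)=x_0$. The ordered list of Whitney cubes visited by $\gamma$ already forms a chain from $Q_t$ to $Q_a$, but consecutive cubes need not meet in an $(n-1)$-dimensional face, since the curve may slip through lower-dimensional intersections. Theorem 3.8 of \cite{DRS} addresses this by inserting short detours through $(n-1)$-neighbors, producing the required chain $Q_{t,0}=Q_t,Q_{t,1},\ldots,Q_{t,\kappa}=Q_a$ with consecutive cubes meeting in an $(n-1)$-face. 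Any inserted cube has side length within a factor of $4$ of its original-chain neighbors by Whitney property (3).

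The heart of the argument is then the containment $Q_{t,i}\subseteq \lambda Q_{t,j}$ for all $0\leq i\leq j\leq \kappa$. Choose parameter values $s_i\leq s_j$ so that $Q_{t,i}$ (resp. $Q_{t,j}$) either contains $\gamma(s_i)$ (resp. $\gamma(s_j)$) or is a detour cube adjacent to such. By the John condition, $\text{dist}(\gamma(s_j),\partial\Omega)\geq s_j/\beta$, and Whitney property (2) then gives a lower bound $\ell_j\geq c(n,\beta)\,s_j$ on the side length of $Q_{t,j}$. Conversely, $\text{diam}(Q_{t,i})\leq C_1 s_i \leq C_1 s_j$, and since $\gamma$ is arc-length parameterized, the distance between the centers of $Q_{t,i}$ and $Q_{t,j}$ is at most $(s_j-s_i)+\text{diam}(Q_{t,i})+\text{diam}(Q_{t,j})\leq C_2 s_j$. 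Combining these estimates, every point of $Q_{t,i}$ lies within $C(n,\beta)\,\ell_j$ of the center of $Q_{t,j}$, yielding $Q_{t,i}\subseteq \lambda Q_{t,j}$ with $\lambda=\lambda(n,\beta)$.

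The main obstacle is the bookkeeping around the refinement to $(n-1)$-neighbor transitions: one must verify that inserting detours does not spoil the side-length growth needed for the inclusion to hold for all pairs $i\leq j$, as opposed to the weaker formulation in \cite{BKL} which only asserts it for $i=0$. This is manageable because each detour cube sits between two cubes of comparable size, so its own side length is comparable as well, and the growth estimate along the refined chain changes only by a bounded multiplicative factor that is absorbed into the final constant $\lambda$. Once this is in place, the same $\lambda$ serves for every $t\in\Gamma$, giving the required uniform Boman parameter.
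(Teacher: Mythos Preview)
Your route differs from the paper's. The paper takes the full inclusion $Q_{t,i}\subseteq \lambda Q_{t,j}$ for all $i\leq j$ as already supplied by Theorem~3.8 of \cite{DRS}, and its own proof is devoted solely to the $(n-1)$-neighbor refinement: between two consecutive cubes meeting only in a $d$-face it inserts $n-d-1$ Whitney cubes touching that face, notes that Whitney property~(3) makes all of these mutually comparable in size (so replacing $\lambda$ by $C_n\lambda$ restores the inclusion), and then excises any subchain between repeated cubes to recover pairwise distinctness. You instead reprove the inclusion itself from the John curve, which is more self-contained but reverses what you attribute to \cite{DRS}: that reference supplies the inclusion, not the $(n-1)$-face detours.

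Your direct argument contains one genuine slip. The bound $\operatorname{diam}(Q_{t,i})\leq C_1 s_i$ is false in general: for $i=0$ one may take $s_0=0$, yet $Q_{t,0}=Q_t$ has positive diameter. The John condition gives only a \emph{lower} bound on $\operatorname{dist}(\gamma(s),\partial\Omega)$, hence a lower bound on Whitney cube sizes along the curve, not an upper one. The repair is short. Since $\gamma(s_j)\in Q_{t,j}$ (up to a detour), Whitney property~(2) gives $\operatorname{dist}(\gamma(s_j),\partial\Omega)\leq 5\operatorname{diam}(Q_{t,j})$, while the John condition gives $\operatorname{dist}(\gamma(s_j),\partial\Omega)\geq s_j/\beta$; hence $s_j\leq C\ell_j$. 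Then $|\gamma(s_i)-\gamma(s_j)|\leq s_j\leq C\ell_j$ yields
\[
\operatorname{dist}(\gamma(s_i),\partial\Omega)\leq \operatorname{dist}(\gamma(s_j),\partial\Omega)+|\gamma(s_i)-\gamma(s_j)|\leq C'\ell_j,
\]
and Whitney property~(2) once more gives $\operatorname{diam}(Q_{t,i})\leq \operatorname{dist}(\gamma(s_i),\partial\Omega)\leq C'\ell_j$. With this correction your containment argument goes through, and the detour bookkeeping you describe is then exactly what the paper does. You should also record, as the paper does, that the pairwise-different requirement in Definition~\ref{Boman chain condition} is recovered by deleting loops.
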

\begin{proof} Given a Whitney decomposition $\{Q_t\}_{t\in\Gamma}$ of $\O$ and following \cite{DRS}, there is a distinguished cube $Q_a$, and $\lambda>1$ such that for each cube $Q_t$ there is a chain of cubes pairwise different $Q_{t,0},Q_{t,1},\cdots,Q_{t,\kappa}$ that connects $Q_t$ with $Q_a$ and satisfies (\ref{Boman}). Let us modify this chain in order to have the property that two consecutive cubes in the chain are \mbox{$(n-1)$}-neighbors. Thus, suppose that $F:= Q_{t,i-1}\cap Q_{t,i}$  has dimension $d$ in $0\leq d\leq n-2$. Then, we take $n-d-1$ Whitney cubes intersecting $F$ such that two consecutive cubes in the chain $Q_{t,i-1},Q_1, \cdots, Q_{n-d-1},Q_{t,i}$  are \mbox{$(n-1)$}-neighbors. Moreover, from the third condition in the Whitney decomposition we have that the dilation by a constant $C_n$ of each cube in this small list contains the other ones. Thus, repeating this process between two consecutive cubes in $Q_{t,0},Q_{t,1},\cdots,Q_{t,\kappa_t}$ and replacing $\lambda$ by $C_n\lambda$ in (\ref{Boman}), we obtain a Boman chain of Whitney cubes where two consecutive cubes are \mbox{$(n-1)$}-neighbors. The pairwise different condition is easily recovered, in case it is necessary, by removing the cubes in the chain between the repeated cubes.
\end{proof}
\begin{remark} It is well known that if $\O$ satisfies the Boman chain condition with a distinguished cube $Q_a$, then we can take as a distinguished cube any arbitrary cube in the Whitney decomposition. However, the constant $\lambda$ in (\ref{Boman}) may vary.
\end{remark}

In order to define an appropriate tree covering of $\O$, we have to prove that John domains satisfy the new condition stated below which is apparently richer than the Boman chain condition.

\begin{defi}\label{Boman tree condition} Let $\O\subset\R^n$ be a bounded domain. We say that $\O$ satisfies the {\it Boman tree condition} if there exists a Whitney decomposition $\{Q_t\}_{t\in\Gamma}$, where $\Gamma$ has a rooted tree structure, that satisfies  
\begin{eqnarray}\label{Boman tree} 
Q_s\subseteq K Q_{t},
\end{eqnarray}
for any $s,t\in\Gamma$, with $s\succeq t$.
Moreover, if two vertices $t$ and $s$ are adjacent in $\Gamma$ then $Q_t$ and $Q_s$ must be \mbox{$(n-1)$}-neighbors.
\end{defi}

\begin{lemma}\label{Existence of a tree} Boman chain condition and Boman tree condition are equivalent.
\end{lemma}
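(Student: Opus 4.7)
This direction is immediate. Given the Boman tree condition with constant $K$, for each $t\ne a$ I would take as the chain the unique path from $t$ to the root in the tree, say $(t=t_0,t_1,\ldots,t_m=a)$ with $t_{i+1}$ the parent of $t_i$. Consecutive cubes are $(n-1)$-neighbors by the tree condition, and for $i\le j$ one has $t_i\succeq t_j$ in the partial order, so $Q_{t_i}\subseteq K\,Q_{t_j}$. Hence this sequence is a Boman chain with $\lambda=K$ and distinguished cube $Q_a$.

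\textbf{Chain $\Rightarrow$ tree.} This is the substantive direction. For each $t\ne a$ I would first fix a Boman chain $C_t=(c_{t,0}=t,c_{t,1},\ldots,c_{t,\kappa(t)}=a)$ of minimum length $\kappa(t)$ and set $\pi(t):=c_{t,1}$. The tail of $C_t$ starting at $c_{t,1}$ is a Boman chain from $c_{t,1}$ to $a$ of length $\kappa(t)-1$, so $\kappa(\pi(t))\le\kappa(t)-1$; iterating $\pi$ therefore strictly decreases $\kappa$ and must reach $a$ in finitely many steps. This endows $\Gamma$ with a rooted tree structure whose adjacent vertices are $(n-1)$-neighbors by the very definition of a Boman chain.

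The delicate point is the containment $Q_s\subseteq K\,Q_t$ for every pair $s\succeq t$. The tree path $(s,\pi(s),\pi^2(s),\ldots,a)$ does not in general coincide with $C_s$, because $\pi$ applied to $c_{s,1}$ uses $C_{c_{s,1}}$ and its second element may differ from $c_{s,2}$; and the naive bound $Q_s\subseteq\lambda^{d}Q_t$, where $d$ is the tree distance from $s$ to $t$, is useless since trees arising from John domains can be arbitrarily deep. To avoid this compounding I plan to choose chains \emph{coherently}: process vertices in increasing order of $\kappa$ and, when handling $t$, pick as parent an $(n-1)$-neighbor $s$ already assigned a chain $C_s$ such that prepending $t$ to $C_s$ yields a valid Boman chain for $t$ with a uniform constant. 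A candidate for $s$ is the second element $c_{t,1}$ of any minimal chain realizing $\kappa(t)$, which provides $Q_t\subseteq\lambda Q_s$; the Whitney properties (comparable sizes of adjacent cubes and at most $12^n$ neighbors per cube) keep the assembly finite at each step.

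Once this coherent selection is in place, the tree path from any $s$ to the root coincides with its assigned Boman chain $C_s$, and hence every ancestor $t$ of $s$ in the tree lies on $C_s$ and satisfies $Q_s\subseteq\lambda Q_t$, giving the tree condition with a constant $K$ depending only on $\lambda$ and $n$. The main hurdle I anticipate is precisely establishing the coherent selection with a uniform constant: one has to combine the minimality of $\kappa$ with the Whitney geometry to prevent $K$ from growing with the depth, a compounding phenomenon that spoils every naive greedy construction.
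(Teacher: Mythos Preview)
Your Tree $\Rightarrow$ Chain argument is correct and coincides with the paper's.

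For Chain $\Rightarrow$ Tree, however, there is a genuine gap, and you have essentially identified it yourself at the end: you have not shown how to make the ``coherent selection'' work with a uniform constant, and the greedy construction you describe does compound. Concretely, suppose you have already assigned to $s=c_{t,1}$ a chain $C_s=(s,c_{s,1},\dots,a)$ which is Boman with constant $\lambda'$, and you prepend $t$. You need $Q_t\subseteq\lambda' Q_{c_{s,j}}$ for every $j$, but the only available facts are $Q_s\subseteq\lambda' Q_{c_{s,j}}$ and that $Q_t$ is a Whitney neighbour of $Q_s$. These give only $Q_t\subseteq C_n\lambda'\,Q_{c_{s,j}}$, so the constant worsens by a fixed factor at every level of the recursion and blows up with the depth. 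Minimality of $\kappa$ does not prevent this: the tree path $(t,\pi(t),\pi^2(t),\dots)$ need not coincide with the fixed Boman chain $C_t$, because $\pi(\pi(t))$ is chosen from $C_{\pi(t)}$, not from $C_t$. Nothing in your argument rules out that the second element of $C_{\pi(t)}$ is a cube not appearing in $C_t$ at all, and then the containment $Q_t\subseteq\lambda Q_{\pi^2(t)}$ is simply not available.

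The paper's argument avoids this by abandoning the attempt to make tree paths be Boman chains. Instead it builds the tree \emph{scale by scale}: writing $l_t=2^{-m_t}$, one attaches at stage $m$ those cubes of scale $m$ not yet present, using only a short portion of their Boman chain to reach the part of the tree already built. A counting argument (using that the cubes in a Boman chain fit disjointly into a dilate of any later cube) shows that each such portion has length at most $\lambda^{2n}$. From this one extracts a quasi-hyperbolic estimate of the form
\[
k(t,t')\le l_*\,(m_{t'}-m_t)+k_*\qquad\text{for }t'\succeq t,
\]
with $l_*,k_*$ depending only on $\lambda$ and $n$. The containment $Q_s\subseteq K Q_t$ then follows by summing the diameters $2^{-m_{t'}}$ along the tree path from $t$ to $s$: the estimate above turns this into a geometric series in $2^{-k(t,t')/l_*}$, which converges with a bound depending only on $\lambda$ and $n$. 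The crucial conceptual shift is that one never needs the tree path itself to be a Boman chain; one only needs the tree distance between two comparable vertices to be linear in the difference of their dyadic scales, and this weaker, additive statement survives the inductive construction without compounding.
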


The reverse of the equivalence in Lemma \ref{Existence of a tree} is obtained by taking $Q_a$ as the distinguished cube, where $a$ is the root of $\Gamma$.  Thus, given $Q_\kappa$, with $\kappa\in\Gamma$, we have $Q_s\subseteq K Q_t,$ for all $a\preceq t\preceq s\preceq \kappa$. Observe that in this case the chain starts at $Q_a$, instead of $Q_{\kappa}$ as in (\ref{Boman}), and ends at $Q_\kappa$.
The other implication is shown in the Appendix and follows some ideas by A. A. Vasil'eva (see \cite{V}). 

Let $\O\subset\R^n$ be a bounded John domain. Then, from Lemma \ref{Existence of a chain}  and  Lemma \ref{Existence of a tree}, we know that there exists a Whitney decomposition $\{Q_t\}_{t\in\Gamma}$ fulfilling all the properties in Definition \ref{Boman tree condition}. Thus, we define a covering $\{\O_t\}_{t\in\Gamma}$ of $\O$ by 
\begin{eqnarray}\label{Covering}
\O_t:=\frac{17}{16}Q_t^\circ,
\end{eqnarray} 
where $\frac{17}{16}Q_t^\circ$ denotes the open cube with the same center as $Q_t$ and side length $\frac{17}{16}$ times the side length of $Q_t$. 

\begin{corollary}\label{tree covering for John} The covering $\{\O_t\}_{t\in\Gamma}$ of the bounded John domain $\O$ defined in (\ref{Covering}) is a tree covering with 
\begin{eqnarray}\label{Bt}
\text{diam}(\O_t)\leq C_n\,\text{diam}(B_t)
\end{eqnarray}
and 
\begin{eqnarray}\label{ratio}
\text{diam}\left(\bigcup_{s\succeq t}\O_s\right)\leq K\,\text{diam}(\O_t),
\end{eqnarray}
for any $t\in\Gamma$ ($t\neq a$ in the first inequality), where $K$ is the constant in (\ref{Boman tree}). 

Moreover, overlapping condition (\ref{overlapping}) is satisfied with $N=12^n$, each $\Omega_t$ intersects a finite number of $\Omega_s$ with $s\in\Gamma$, and 
\begin{eqnarray}\label{distance comparison}
\frac{1}{C_n}\,\text{diam}(\O_t) \leq \rho(\O_t,\partial\Omega) \leq C_n\,\text{diam}(\O_t).
\end{eqnarray}
\end{corollary}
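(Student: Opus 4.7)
The plan is to check each property of the statement in turn, exploiting the rooted tree structure of $\{Q_t\}_{t\in\Gamma}$ provided by Definition \ref{Boman tree condition} and the standard Whitney-cube estimates recalled just before Definition \ref{Boman chain condition}. The overlap and distance bounds are essentially immediate from the Whitney properties, while the construction of the cubes $B_t$ witnessing the tree-covering property is the only genuinely geometric step.

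For the overlap and finite intersection claims, I would set $\epsilon = \tfrac{1}{16}$ and observe that $\Omega_t$ is precisely the expansion $Q_t^*$ used in the paragraph before Definition \ref{Boman chain condition}. That paragraph already gives $\sum_{t\in\Gamma}\chi_{\Omega_t}\le 12^n$ and the fact that each $\Omega_t$ touches at most $12^n$ others, yielding (\ref{overlapping}) with $N=12^n$ and the required finiteness. The distance comparison (\ref{distance comparison}) follows directly from Whitney property (2) together with the fact that $\Omega_t$ is a $\tfrac{17}{16}$-dilation of $Q_t$: we lose at most $\tfrac{1}{32}\operatorname{side}(Q_t)$ of distance to the boundary, which is absorbed into the dimensional constant $C_n$. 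Property (\ref{ratio}) is an application of (\ref{Boman tree}) in Definition \ref{Boman tree condition}: for $s\succeq t$ we have $Q_s\subseteq K Q_t$, hence
\[
\bigcup_{s\succeq t}\Omega_s \;=\; \bigcup_{s\succeq t}\tfrac{17}{16}Q_s^\circ \;\subseteq\; \tfrac{17}{16}K\,Q_t^\circ,
\]
whose diameter is $K\,\text{diam}(\Omega_t)$, so diameter monotonicity closes this step.

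The core step is producing, for every non-root $t\in\Gamma$, an open cube $B_t\subseteq \Omega_t\cap\Omega_{t_p}$ with $\text{diam}(\Omega_t)\le C_n\,\text{diam}(B_t)$, and doing so in a way that makes $\{B_t\}_{t\neq a}$ pairwise disjoint. By Definition \ref{Boman tree condition}, $Q_t$ and $Q_{t_p}$ are $(n-1)$-neighbors, so $F_t:=Q_t\cap Q_{t_p}$ is an $(n-1)$-dimensional face of the smaller of the two cubes. Whitney property (3) makes $\operatorname{side}(Q_t)$ and $\operatorname{side}(Q_{t_p})$ comparable within a factor of $4$; let $\ell_t$ denote the smaller of the two. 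I would pick an $(n-1)$-cube $F_t'$ centered at the center of $F_t$ with side length $\tfrac{1}{2}\ell_t$, and take $B_t$ to be the $n$-cube obtained by thickening $F_t'$ symmetrically across $F_t$ by $\tfrac{1}{32}\ell_t$ on each side. The thickness $\tfrac{1}{16}\ell_t$ is $\le \tfrac{1}{16}\operatorname{side}(Q_t)$ and $\le \tfrac{1}{16}\operatorname{side}(Q_{t_p})$, so $B_t$ is contained in both $\tfrac{17}{16}Q_t^\circ$ and $\tfrac{17}{16}Q_{t_p}^\circ$, giving $B_t\subseteq \Omega_t\cap\Omega_{t_p}$. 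Since $B_t$ straddles the specific face $F_t$ (which lies in the boundary of $Q_t$), and since the interiors of the Whitney cubes are pairwise disjoint, two different $B_t$ and $B_s$ can only overlap if they straddle the same or an intersecting face; the first is impossible because $t$ and $s$ are different children of a common parent would lead to interior-disjoint subfaces $F_t,F_s$ of a common cube, and the small central cubes $F_t',F_s'$ do not meet, while the second does not occur because two distinct Whitney faces intersect only in a set of dimension $\le n-2$, and a thin $n$-cube straddling one cannot meet the interior of a thin $n$-cube straddling another. Finally, $\text{diam}(B_t)\ge \tfrac{1}{2}\ell_t\ge \tfrac{1}{8}\operatorname{side}(Q_t)$, while $\text{diam}(\Omega_t)=\sqrt{n}\,\tfrac{17}{16}\operatorname{side}(Q_t)$, which proves (\ref{Bt}) with a dimensional constant $C_n$.

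The main obstacle, as usual in such constructions, is the pairwise disjointness of the $B_t$'s; the delicate case is when several children of the same parent have their faces meeting in low-dimensional subsets of the parent's boundary. Choosing $F_t'$ strictly in the relative interior of $F_t$ and the straddling thickness small enough (here $\tfrac{1}{16}$ of the smaller side) handles this cleanly because any two distinct faces of a Whitney cube share only a lower-dimensional set, whose neighborhood of width $\tfrac{1}{16}\ell_t$ stays outside the shrunken center cubes $F_t'$.
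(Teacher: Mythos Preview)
Your overall approach matches the paper's: the overlap, finite-intersection, distance, and shadow-diameter claims are handled exactly as in the paper (the paper even says ``the rest is a straightforward calculation''), and the only substantive step is the construction of the $B_t$'s from the shared $(n-1)$-faces $F_t=Q_t\cap Q_{t_p}$. There are, however, two issues worth fixing.

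First, your $B_t$ is not a cube: thickening an $(n-1)$-cube of side $\tfrac12\ell_t$ by $\tfrac{1}{32}\ell_t$ on each side produces a box with one short side $\tfrac{1}{16}\ell_t$, while Definition~\ref{Decomp of Omega} explicitly asks for an open \emph{cube}. This is easy to repair by taking instead the cube of side $\tfrac{1}{16}\ell_t$ (or smaller) centered at the centroid $\alpha_t$ of $F_t$, which is inscribed in your slab; this is exactly what the paper does, ultimately choosing side length $l(Q_t)/64$ so that $B_t\subset\Omega_t\cap\Omega_{t_p}$ and obtaining (\ref{Bt}) with $C_n=64$.

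Second, your disjointness argument is incomplete. You only discuss the case of two children of a common parent, and even there only the situation where $F_t,F_s$ lie on the \emph{same} face of the parent; you do not treat pairs $t,s$ that are unrelated in the tree, nor the pair $(t,t_p)$, nor children whose faces lie on perpendicular faces of the parent. The assertion that ``a thin $n$-cube straddling one face cannot meet the interior of a thin $n$-cube straddling another'' is the heart of the matter and needs justification for all pairs. The paper avoids this case analysis entirely: working in the $\ell^\infty$-metric $d_1$, it shows directly that the face-centroids satisfy $d_1(\alpha_t,\alpha_s)\ge \tfrac{1}{8}\,l(Q_t)$ for every $s\in\Gamma\setminus\{a,t\}$ (this uses only the dyadic structure and Whitney property (3)), and then any choice of cubes centered at the $\alpha_t$ with side at most $\tfrac{1}{8}l(Q_t)$ is automatically pairwise disjoint. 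That single distance estimate replaces your entire case split and is the cleaner route.
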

\begin{proof} Regarding (\ref{ratio}), let us observe that (\ref{Boman tree}) is also valid for the cubes in $\{\O_t\}_{t\in\Gamma}$ as we are dilating the cubes in $\{Q_t\}_{t\in\Gamma}$ by the same factor. Thus,
\begin{eqnarray*}
\O_s\subseteq K \O_t,
\end{eqnarray*}
for any $t,s\in\Gamma$, with $t\preceq s$, obtaining (\ref{ratio}). The rest is a straightforward calculation except the existence of the pairwise disjoint collection $\{B_t\}_{t\neq a}$ satisfying (\ref{Bt}). We know that $Q_t$ and $Q_{t_p}$ are \mbox{$(n-1)$}-neighbors. Thus, $F_t:=Q_t\cap Q_{t_p}$ is a $n-1$ dimensional face of the smallest of this two cubes. We name by $\alpha_t$ the centroid of $F_t$. Let us use the distance $d_1(x,y):=\max_{1\leq i\leq n}|x_i-y_i|$, which is more convenient than $d(x,y)=\sqrt{\sum_{i}(x_i-y_i)^2}$ in this context. Moreover, let us use the side length of $Q_t$, which is denoted by $l(Q_t)$, instead of diam$(Q_t)$. Thus, using the third condition in the Whitney decomposition,  it can be seen that 
\begin{eqnarray*}
d_1(\alpha_t,\alpha_s)\geq \frac{1}{8}l(Q_t),
\end{eqnarray*}
for all $s\in\Gamma\setminus\{a,t\}$. Thus, if we define $B_t$ as the open cube with center at $\alpha_t$ and side length equal to $\frac{l(Q_t)}{8}$, we obtain a collection of pairwise disjoint cubes. However, it is also required that $B_t\subset \O_t\cap \O_{t_p}$. Thus, we take $B_t$ with length side equal to $\frac{l(Q_t)}{64}$ which satisfies the required conditions. Then,  (\ref{Bt}) holds with $C_n=64$. 
\end{proof}

The next lemma will be used to prove the weighted estimate for the $\P_0$-orthogonal decomposition that appears in Theorem \ref{Korn}.

\begin{lemma}\label{weighted T}
Let $\O\subset\R^n$ be a bounded John domain, $\{\O_t\}_{t\in\Gamma}$ the tree covering defined in (\ref{Covering}), and $\beta\geq 0$. Then, the operator $T$ defined in (\ref{Ttree}) and subordinate to $\{\O_t\}_{t\in\Gamma}$ is continuous from $L^q(\O,\rho^{-\beta})$ to itself, where $\rho$ is the distance to the boundary of $\O$. Moreover, its norm is bounded by 
\[\|T\|_{L\to L} \leq C_n^\beta \left(\frac{qN}{q-1}\right)^{1/q}K^\beta,\]
where $L$ denotes $L^q(\O,\rho^{-\beta})$. The constant $K$ is the one in (\ref{Boman tree}) and $N=12^n.$
\end{lemma}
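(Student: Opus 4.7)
The plan is to reduce the weighted continuity of $T$ to the unweighted continuity already established in Lemma \ref{Ttreecont} by exploiting the fact that, for $x \in B_t$ and $y \in W_t$, the distances $\rho(x)$ and $\rho(y)$ are comparable up to a factor involving $K$. This comparability is the geometric heart of the argument and follows directly from Corollary \ref{tree covering for John}.

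First I would establish the key geometric estimate: for every $t \in \Gamma \setminus \{a\}$, every $x \in B_t$, and every $y \in W_t$, one has $\rho(y) \leq C_n K \rho(x)$, where $C_n$ depends only on $n$. To see this, note that $B_t \subset \Omega_t$ combined with (\ref{distance comparison}) gives $\rho(x) \geq \frac{1}{C_n}\,\text{diam}(\Omega_t)$. On the other hand, if $y \in W_t$, then $y \in \Omega_s$ for some $s \succeq t$, so (\ref{distance comparison}) yields $\rho(y) \leq C_n\,\text{diam}(\Omega_s)$. Since $\Omega_s \subseteq K\Omega_t$ by the Boman tree condition (\ref{Boman tree}) (which as observed in Corollary \ref{tree covering for John} transfers to the dilated cubes $\Omega_t$), we have $\text{diam}(\Omega_s) \leq K\,\text{diam}(\Omega_t)$. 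Chaining these inequalities gives the claim (up to adjusting $C_n$).

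Next, given $g \in L^q(\Omega, \rho^{-\beta})$, set $\tilde g := |g|\rho^{-\beta}$, so that $\|\tilde g\|_{L^q(\Omega)} = \|g\|_{L^q(\Omega,\rho^{-\beta})}$. Writing $|g(y)| = \tilde g(y)\rho(y)^\beta$ inside the definition of $T$, and using that $\rho(y)^\beta \leq (C_n K)^\beta \rho(x)^\beta$ on the region of integration (here is where $\beta \geq 0$ is essential, and where we use that the $B_t$'s are pairwise disjoint so at most one $\chi_t(x)$ is nonzero for each $x$), we obtain the pointwise bound
\begin{equation*}
Tg(x) \;=\; \sum_{t \neq a} \frac{\chi_t(x)}{|W_t|}\int_{W_t}\tilde g(y)\rho(y)^\beta\,\d y \;\leq\; (C_n K)^\beta\,\rho(x)^\beta\,T\tilde g(x).
\end{equation*}

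Multiplying by $\rho(x)^{-\beta}$, raising to the $q$-th power, integrating over $\Omega$, and applying Lemma \ref{Ttreecont} to $\tilde g$ yields
\begin{equation*}
\|Tg\|_{L^q(\Omega,\rho^{-\beta})} \;\leq\; (C_n K)^\beta\,\|T\tilde g\|_{L^q(\Omega)} \;\leq\; (C_n K)^\beta\cdot 2\Bigl(\tfrac{qN}{q-1}\Bigr)^{1/q}\|\tilde g\|_{L^q(\Omega)},
\end{equation*}
which, after absorbing the constant $2$ into $C_n^\beta$ (enlarging $C_n$ if necessary), gives the stated bound. The main obstacle is really just the geometric comparison of $\rho(x)$ and $\rho(y)$; once that is in hand, the pointwise domination and reduction to Lemma \ref{Ttreecont} are essentially bookkeeping.
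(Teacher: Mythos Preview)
Your argument is correct and is essentially identical to the paper's: both establish the pointwise comparison $\rho(y)\leq C_nK\rho(x)$ for $x\in B_t$, $y\in W_t$ via (\ref{distance comparison}) and (\ref{Boman tree}), use it (with $\beta\geq 0$) to reduce $Tg$ to $T(|g|\rho^{-\beta})$ in the unweighted space, and then invoke Lemma~\ref{Ttreecont}. One small remark: the factor $2$ from Lemma~\ref{Ttreecont} cannot literally be absorbed into $C_n^\beta$ when $\beta=0$, but the paper's own displayed bound has the same discrepancy, so this is a cosmetic issue shared with the original.
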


It can be seen, after multiplying by an appropriate constant, that the Hardy-Littlewood maximal operator pointwise bounds the Hardy type operator $T$ defined by using the tree covering introduced in (\ref{Covering}). Thus $T$ is continuous from $L^p(\O,\o)$ to itself if $\o$ belongs to the $A_p$ Muckenhoupt class. However, arbitrary positive powers of the distance to $\partial\O$ are not necessary in this class. Thus, we have to prove the weighted continuity of $T$ in a different way.

\begin{proof} Given $g\in L^q(\O,\rho^{-\beta})$ we have
\begin{eqnarray*} 
\int_\O|Tg(x)|^q \rho^{-q\beta}(x)\,\d x&=&\int_{\O} \rho^{-q\beta}(x) \left|\sum_{a\neq t\in\Gamma}\frac{\chi_t(x)}{|W_t|}\int_{W_t}|g(y)|\,\d y\right|^q\,\d x\\
&=& \int_{\O} \rho^{-q\beta}(x) \left|\sum_{a\neq t\in\Gamma}\frac{\chi_t(x)}{|W_t|}\int_{W_t}|g(y)|\rho^{-\beta}(y)\,\rho^{\beta}(y)\,\d y\right|^q\,\d x=(1)
\end{eqnarray*}

Now, given $y\in W_t$ there exists $s\succeq t$ such that $y\in\O_s$. Thus, it can be seen that 
\[\rho(y)\leq C_n {\rm diam}(\O_s)\leq C_n\,K\,{\rm diam}(\O_t).\]
Then, using that $\beta$ is nonnegative we have
\[\rho^\beta(y)\leq C_n^\beta\,K^\beta\,({\rm diam}(\O_t))^\beta \leq C_n^\beta\,K^\beta\,\rho^\beta(x),\]
for all $x\in B_t$. Recall that $\chi_t$ is the characteristic function of $B_t\subset \O_t$ and $\text{diam}(\O_t)$ is comparable to $\rho(\O_t,\partial\Omega)$. Thus, 
\begin{eqnarray*} 
(1)&=& C_n^{q\beta}\,K^{q\beta} \int_{\O} \rho^{-q\beta}(x) \left|\sum_{a\neq t\in\Gamma}\frac{\chi_t(x)\rho^\beta(x)}{|W_t|}\int_{W_t}|g(y)|\rho^{-\beta}(y)\,\d y\right|^q\,\d x\\
&=& C_n^{q\beta}\,K^{q\beta}\int_{\O} \left|\sum_{a\neq t\in\Gamma}\frac{\chi_t(x)}{|W_t|}\int_{W_t}|g(y)|\rho^{-\beta}(y)\,\d y\right|^q\,\d x\\
&=&C_n^{q\beta}\,K^{q\beta} \int_{\O} \left|T(g\rho^{-\beta})\right|^q\,\d x=(2)
\end{eqnarray*}

Finally, $g\rho^{-\beta}$ belongs to $L^q(\O)$ and $T$ is continuous from $L^q(\O)$ to itself (see Lemma \ref{Ttreecont}), thus 
\begin{eqnarray*} 
(2)\leq \left(C_n^{q\beta}\, 2^q \frac{qN}{q-1}\right)K^{q\beta}\, \|g\|^q_{L^q(\O,\rho^{-\beta})}.
\end{eqnarray*}
\end{proof}

\begin{proof} [Proof of Theorem \ref{Korn in John}] Using Theorem \ref{Decomp} we can observe that there exists a $\P_0$-decomposition $\{g_t\}_{t\in\Gamma}$ of any integrable function $g$. This decomposition is subordinate to the tree covering $\{\O_t\}_{t\in\Gamma}$ defend in (\ref{Covering}). Moreover, it verifies (\ref{P02}) that, in this case, implies that 
\begin{eqnarray*}
|g_t(x)|\leq |g(x)|+C_n\, K^n\,Tg(x),
\end{eqnarray*}
for any $x\in\O_t$ with $t\in\Gamma$. Thus, by a straightforward calculation we have 
\begin{eqnarray*}
\int_{\O_t}|g_t(x)|^q\rho^{-q\beta}(x)\,\d x\leq 2^{q-1}\left(\int_{\O_t}|g(x)|^q\rho^{-q\beta}(x)\,\d x+C_n^q\, K^{qn}\int_{\O_t}|Tg(x)|^q\rho^{-q\beta}(x)\,\d x\right).
\end{eqnarray*}
Next, by using the bound on the overlap and Lemma \ref{weighted T}, we have the estimate required in Theorem \ref{Korn}
\begin{eqnarray*}
\sum_{t\in\Gamma} \|g_t\|^q_{L^q(\O_t,\rho^{-\beta})}&\leq& 2^{q-1}N\left( \|g\|^q_{L^q(\O,\rho^{-\beta})}+c^q_n\, K^{qn} \|Tg\|^q_{L^q(\O,\rho^{-\beta})}\right)\\
&\leq& 2^{q-1}N\left(1+c^q_n\, K^{qn}\left(C_n^{q\beta}\, 2^q \frac{qN}{q-1}\right)K^{q\beta}  \right)\|g\|^q_{L^q(\O,\rho^{-\beta})}.
\end{eqnarray*}
Moreover, being consistent with the notation used in Theorem \ref{Korn} we have that \[C_0=C_{n,p,\beta}\,K^{n+\beta}.\]

Finally, we show the validity of Korn inequality (\ref{inf weighted local Korn}) on $\O_t$, with $\o=\rho^{\beta}$, with a constant $C_{p,n}$ independent of $t\in\Gamma$. Using that the distance from $\O_t$ to the boundary of $\O$ is comparable to $\text{diam}(\O_t)$, it is easy to show that the weight is comparable to a constant over $\O_t$, indeed, 
\[\frac{1}{C_n}\,\text{diam}(\O_t) \leq \rho(x) \le C_n\,\text{diam}(\O_t),\]
for all $x\in\O_t$. Moreover, Korn inequality (\ref{inf weighted local Korn}) with $\o=1$ is valid on any cube $\O_t$ with uniform constant. Thus, 
\begin{eqnarray*}
\inf_{\varepsilon(\ww)=0}\|D(\vv-\ww)\|_{L^p(\O_t,\rho^\beta)}&\leq& C_n^\beta\,\text{diam}(\O_t)^{\beta} \inf_{\varepsilon(\ww)=0}\|D(\vv-\ww)\|_{L^p(\O_t)}\\
&\leq&C_n^\beta\,\text{diam}(\O_t)^{\beta} C_{p,n} \|\varepsilon(\vv)\|_{L^p(\Omega_t)}\\
&\leq&C_n^\beta\, C_{p,n} \|\varepsilon(\vv)\|_{L^p(\Omega_t,\rho^\beta)},
\end{eqnarray*}
with a constant $C_1=C_{p,n,\beta}$. Thus, the validity of (\ref{weighted Korn John}) and the estimate of its constant follows from Theorem \ref{Korn} and Remark \ref{estimate}.
\end{proof}

\subsection{Weighted solutions of divergence problem on John domains}

In this subsection, we basically combine Theorem 3.2 from \cite{L} and Lemma \ref{Existence of a tree} to show the existence of a weighted solution of $\di\uu=f$ on John domains. This problem is basic for the theoretical and numerical analysis of the Stokes equations in $\O$ and has been widely studied (see \cite{G,ADM,Bog,Du,DMRT,L} and references therein). The solutions belong to $W^{1,q}_0(\O,\rho^{-\beta})^n$ which is defined as the 
closure of $C_0^\infty(\O)^n$ with the norm 
\begin{eqnarray*}
\|\uu\|_{W^{1,q}_0(\O,\rho^{-\beta})^n}:=\|D\uu\|_{L^q(\O,\rho^{-\beta})^{n\times n}}.
\end{eqnarray*}

\begin{theorem}
Let $\O\subset\R^n$ be a bounded John domain with $n\geq 2$, $1<q<\infty$ and $\beta\in\R_{\geq 0}$. Given $f\in L^q(\O,\rho^{-\beta})$, with $\int_\O f=0$, there exists a solution $\uu\in W^{1,q}_0(\O,\rho^{-\beta})^n$ of $\di\uu=f$ that satisfies
\begin{eqnarray*}
\|D\uu\|_{L^q(\O,\rho^{-\beta})}\leq C_{n,q,\beta} K^{n+\beta} \|f\|_{L^q(\O,\rho^{-\beta})},
\end{eqnarray*}
where $\rho(x)$ is the distance to the boundary of $\Omega$  and $K$ is the constant in (\ref{Boman tree}).
\end{theorem}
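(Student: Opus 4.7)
The plan is to construct the solution by the same covering-and-decomposition machinery that powered the proof of Theorem \ref{Korn in John}, replacing the duality argument used there with a direct assembly from local Bogovskii solutions on cubes. Concretely: invoke the tree covering $\{\O_t\}_{t\in\Gamma}$ defined in (\ref{Covering}), apply the $\P_0$-decomposition of Theorem \ref{Decomp} to write $f = \sum_{t\in\Gamma} f_t$ with $\text{supp}(f_t)\subseteq \O_t$ and $\int_{\O_t} f_t = 0$, solve $\di \uu_t = f_t$ locally on each cube $\O_t$ with zero boundary values, and set $\uu := \sum_t \uu_t$.

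First I would use the density statement in Lemma \ref{weighted dense} (with $m=0$) to reduce to the case where $f$ has support intersecting only finitely many $\O_t$; under this reduction the sum defining $\uu$ is finite, and the general case follows by continuity once the estimate is proved for this dense subspace. For such $f$, Theorem \ref{Decomp} produces the $\P_0$-decomposition with only finitely many nonzero pieces. On each cube $\O_t$, which is star-shaped with respect to a ball of comparable radius and has fixed eccentricity, the classical Bogovskii construction gives a linear operator delivering $\uu_t \in W^{1,q}_0(\O_t)^n$ with $\di \uu_t = f_t$ and
\begin{equation*}
\|D\uu_t\|_{L^q(\O_t)} \leq C_{n,q}\,\|f_t\|_{L^q(\O_t)},
\end{equation*}
with a constant independent of $t\in\Gamma$. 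Extending each $\uu_t$ by zero and summing yields $\uu \in W^{1,q}_0(\O)^n$ with $\di \uu = f$.

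For the weighted estimate, Corollary \ref{tree covering for John} gives $\rho(x) \simeq \text{diam}(\O_t)$ for $x\in\O_t$, so the factor $\rho^{-\beta}$ is comparable to the constant $\text{diam}(\O_t)^{-\beta}$ on $\O_t$. This allows one to pull the weight out on both sides of the local Bogovskii estimate and obtain
\begin{equation*}
\|D\uu_t\|_{L^q(\O_t,\rho^{-\beta})} \leq C_{n,q,\beta}\,\|f_t\|_{L^q(\O_t,\rho^{-\beta})},
\end{equation*}
uniformly in $t$. Raising to the $q$-th power, summing, using the overlap bound $N = 12^n$ from Corollary \ref{tree covering for John}, and then invoking the weighted decomposition estimate already derived in the proof of Theorem \ref{Korn in John} (which relies on the pointwise bound (\ref{P02}) together with the weighted continuity of the Hardy-type operator $T$ from Lemma \ref{weighted T}) yields
\begin{equation*}
\|D\uu\|^q_{L^q(\O,\rho^{-\beta})} \leq C\,\sum_{t\in\Gamma}\|f_t\|^q_{L^q(\O_t,\rho^{-\beta})} \leq C'\,K^{q(n+\beta)}\,\|f\|^q_{L^q(\O,\rho^{-\beta})}.
\end{equation*}

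The main obstacle is purely bookkeeping: tracking how the factor $K^{n+\beta}$ arises from the two sources in the chain, namely the $K^n$ coming from the ratio $|W_t|/|B_t|$ in the pointwise estimate (\ref{P02}) (which is controlled by (\ref{ratio})), and the $K^\beta$ coming from the weighted operator norm of $T$ in Lemma \ref{weighted T}. Once the density reduction is in place and the local Bogovskii estimate is uniform, no new analytic difficulty appears; the heart of the argument is already packaged in Theorem \ref{Decomp}, Lemma \ref{weighted T}, and the geometric Lemma \ref{Existence of a tree}.
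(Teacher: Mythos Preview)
Your approach is correct and is substantively identical to the paper's; the only difference is in packaging. The paper's proof is a one-paragraph verification that the hypotheses (a)--(f) of Theorem~3.2 in \cite{L} hold for the tree covering of Corollary~\ref{tree covering for John}, with $M_1 = C_n K^n$ (from the bound $\omega(x)\ge 1/(C_nK^n)$ obtained via (\ref{ratio})), $M_2 = C_{n,\beta}$ (uniform Bogovskii on cubes, using that $\rho$ is comparable to a constant on each $\O_t$), and $M_T = C_{n,q,\beta} K^\beta$ (from Lemma~\ref{weighted T}); the cited theorem then delivers the solution together with the constant $K^{n+\beta}$. What you have outlined is a self-contained reconstruction of that cited theorem: reduce by density, apply the $\P_0$-decomposition of Theorem~\ref{Decomp}, solve locally via Bogovskii, sum, and estimate using the pointwise bound (\ref{P02}) and the weighted boundedness of $T$. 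Both routes rest on exactly the same ingredients and yield the same constant; yours is more explicit, the paper's more economical by outsourcing the assembly step to \cite{L}.
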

\begin{proof} Let us show that the hypothesis in Theorem 3.2, \cite{L}, are fulfilled in this case. First, notice that the notation that we use in this article for $p$ and $q$ is swapped in \cite{L}.  Let $\{\O_t\}_{t\in\Gamma}$ be a tree covering as in Lemma \ref{tree covering for John}. For being a tree covering it satisfies {\bf (b)}. $\{\O_t\}_{t\in\Gamma}$ is obtained by expanding a Whitney decomposition which implies {\bf (a)} and {\bf (c)}, with $N=12^n$. Condition {\bf (d)} involves a weight $\o$ which depends on the geometry of $\O$:
\begin{equation*}
\o(x):=\left\{
  \begin{array}{l l}
     \dfrac{|B_t|}{|W_t|} & \quad \text{if $x\in B_t$ for some $t\in\Gamma$, $t\neq a$}\\
     \\
     1 & \quad \text{otherwise}.\\
   \end{array} \right.
\end{equation*}
Now, from (\ref{ratio}) it follows that $\o(x)\geq \frac{1}{C_nK^n}$ for any $x\in\O$. Thus, by taking $\bar{\o}:=1$ and $M_1:=C_nK^n$
we have {\bf (d)}. In order to prove {\bf (e)} we define $\hat{\o}:=\rho^{-\beta}$, and use that $\rho$ is comparable to diam$(\O_t)$ over $\O_t$ ((1) in Corollary \ref{tree covering for John}). Thus, using that there are solutions for the divergence problem on cubes with uniform constant we have that given $t\in\Gamma$ and $g\in L^q(\O_t,\rho^{-\beta})$, with vanishing mean value, there exists a solution  $\vv\in W_0^{1,q}(\O_t,\rho^{-\beta})^n$ of $\di\vv=g$ with 
\begin{eqnarray*}
\|D\vv\|_{L^q(\O_t,\rho^{-\beta})}\leq C_{n,\beta} \|g\|_{L^q(\O_t,\rho^{-\beta})}.
\end{eqnarray*}
Thus, $M_2$ is a constant that depends only on $n$ and $\beta$.

Finally, {\bf (f)} follows from Lemma \ref{weighted T} with $M_T=C_{n,q,\beta}\,K^{\beta}$. 

The estimate of the constant follows from the estimate in Theorem 3.2 in \cite{L}.
\end{proof}

\begin{appendices}

\section{Boman chain implies Boman tree condition}
\setcounter{equation}{0}
\numberwithin{equation}{section}

This section is devoted to prove Lemma \ref{Existence of a tree}. 

According to the previous section, $\{Q_t\}_{t\in\Gamma}$ denotes a Whitney decomposition of a bounded domain $\O\subset\R^n$ that satisfies Boman condition (\ref{Boman}). The center cube $Q_a$ can be arbitrarily chosen. Thus, we take it with the biggest size. Moreover, without loss of generality and in order to simplify the notation we are going to assume that its side length is 1. For any $s\in\Gamma$, we denote by $l_s$ the side length of $Q_s$. In addition, the elements in the covering are dyadic cubes, thus $l_s=2^{-m_s}$, where $m_s$ is a nonnegative integer number. The integer number $m_s$ can also be denoted by $m(Q_s)$. For example, $m(Q_a)=m_a=0$.

Let $G=(V,E)$ be a connected graph. Given $v,v'\in V$ we define the distance $k(v,v')$ as the minimal $j\in\N_0$ such that there exists a simple path $(v_0,v_1,\cdots,v_j)$ with length $j$ that connects $v$ with $v'$. Namely, $v_0=v$, $v_j=v'$, and the vertices $v_i$ and $v_{i+1}$ are adjacent. The function $k$ depends on $V$ and $E$.

\begin{lemma}\label{directed graph} Let $G=(V,E)$ be a connected graph with a distinguished vertex $v_\ast\in V$. The graph also satisfies that $k(v,v_\ast)\leq k$ for all $v\in V$, where $k$ is a fixed value in $\N$. Then, there exists a subgraph $\tilde{G}=(V,\tilde{E})$ with the same vertices which is a rooted tree with root $v_\ast$ such that $\tilde{k}(v,v_\ast)\leq k$ for all $v\in V$, where $\tilde{k}$ is the distance for the new graph $\tilde{G}$.
\end{lemma}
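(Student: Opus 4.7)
The plan is to perform a standard breadth-first-search construction rooted at $v_\ast$ and keep exactly one parent edge per non-root vertex. I would begin by setting $d(v):=k(v,v_\ast)$ for every $v\in V$ and partitioning $V=V_0\cup V_1\cup\cdots\cup V_k$, where $V_j:=\{v\in V:d(v)=j\}$; by hypothesis the union terminates at level $k$, and $V_0=\{v_\ast\}$. A small but crucial observation is the adjacency estimate $|d(u)-d(w)|\leq 1$ whenever $\{u,w\}\in E$, which follows by prepending the edge $\{u,w\}$ to a shortest path from either endpoint to $v_\ast$.

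Next, for each $v\in V_j$ with $j\geq 1$, any geodesic $v_\ast=v_0,v_1,\ldots,v_j=v$ in $G$ supplies an adjacent vertex $v_{j-1}$ for which the adjacency estimate together with the definition of $d$ forces $d(v_{j-1})=j-1$. I would select one such neighbour, call it the parent $p(v)$, and set
\[
\tilde{E}:=\bigl\{\{v,p(v)\}:v\in V\setminus\{v_\ast\}\bigr\}\subseteq E,
\]
so that $\tilde{G}=(V,\tilde{E})$ is a genuine subgraph of $G$.

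Then I would verify that $\tilde{G}$ is a rooted tree attaining the required distance bound. The parent edges are pairwise distinct: if $\{v,p(v)\}=\{v',p(v')\}$ with $v\neq v'$ then $v'=p(v)$ and $v=p(v')$, forcing $d(v)=d(v')-1=d(v)-2$, a contradiction. Hence $|\tilde{E}|=|V|-1$. Connectedness follows from iterating the parent map: from any $v$ the sequence $v,p(v),p^2(v),\ldots$ has strictly decreasing $d$-values and therefore reaches $v_\ast$ after exactly $d(v)$ steps. A connected graph on $|V|$ vertices with $|V|-1$ edges is necessarily a tree, and the same parent chain exhibits a $\tilde{G}$-path of length $d(v)\leq k$ from $v$ to $v_\ast$, yielding $\tilde{k}(v,v_\ast)\leq k$.

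There is no substantive obstacle, this being the classical BFS tree construction; the only delicate point is the adjacency estimate $|d(u)-d(w)|\leq 1$, which both guarantees the existence of a parent one level below and underpins the edge-counting argument that turns connectedness into acyclicity.
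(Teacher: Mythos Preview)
Your breadth-first-search construction is exactly the approach the paper takes: it layers $V$ by the level sets $V_i=\{v:k(v,v_\ast)\le i\}$ and, for each vertex in $V_i\setminus V_{i-1}$, keeps a single edge down to $V_{i-1}$. The only point to flag is your acyclicity argument. The paper explicitly allows $V$ to be countably infinite (and the auxiliary graph to which this lemma is applied in the next proof may well be infinite), so the identity $|\tilde E|=|V|-1$ together with ``connected on $|V|$ vertices with $|V|-1$ edges is a tree'' is not available. This is easily repaired without altering your construction: if $\tilde G$ contained a cycle, choose a vertex $w$ on it with maximal $d(w)$; each edge of $\tilde E$ incident to $w$ is either $\{w,p(w)\}$ or $\{u,p(u)\}$ with $p(u)=w$, and the latter would give $d(u)=d(w)+1$, contradicting maximality, so both cycle-neighbours of $w$ equal $p(w)$, impossible in a simple cycle. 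With that direct argument in place your proof is complete and coincides with the paper's.
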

\begin{proof} The rooted tree $\tilde{G}$ is obtained by eliminating edges from $E$ by using an inductive argument. Indeed, we are going to define a collection $G_i:=(V_i,E_i)$ of subgraphs of $G$ for each $0\leq i\leq k$. The set of vertices $V_i$ has the vertices $v\in G$ such that $k(v,v_\ast)\leq i$. If $k_i$ denotes the distance between vertices in $G_i$, we define $E_i$ inductively so that $G_i$ is a subtree of $G_{i+1}$ and $k(v,v_\ast)=k_i(v,v_\ast)$ for all $v\in V_i$. 

Thus, we define $V_0=\{v_\ast\}$ and $E_0=\emptyset$. Next, given $1\leq i\leq k$, the process consists on taking exactly one edge that joins each vertex in $V_i\setminus V_{i-1}$ with $V_{i-1}$ and eliminating the other edges. 
\end{proof}

\begin{lemma}\label{Length of the chain}
Let $\{Q_t\}_{t\in\Gamma}$ be a Whitney decomposition of $\O$ satisfying (\ref{Boman}). Then, there exists a tree structure in $\Gamma$ such that for all $t,t'\in\Gamma$, with $t'\succeq t$, it follows:
\begin{eqnarray}\label{QH distance}
k(t,t')\leq l_{\ast}(m_{t'}-m_t)+k_{\ast},
\end{eqnarray}
where 
\begin{eqnarray*}
l_{\ast}&:=&(1+\lambda^{2n})(2+\log_2(\lambda))+1\\
k_{\ast}&:=&(1+\lambda^{2n})(2+\log_2(\lambda))+l_\ast(1+\log_2(\lambda)).
\end{eqnarray*}
The constant $\lambda$ is the one introduced in (\ref{Boman}). In addition, if two vertices $s$ and $t$ are adjacent then $Q_s$ and $Q_t$ must be neighbors.\end{lemma}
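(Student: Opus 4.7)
The plan is to apply the construction inside the proof of Lemma \ref{directed graph} to the graph $G=(\Gamma, E)$ where two vertices $s,t$ are joined by an edge whenever $Q_s$ and $Q_t$ are Whitney neighbors; this graph is connected since $\O$ is. The key preliminary is to bound the length $\kappa(t)$ of the Boman chain $Q_t=Q_{t,0},\dots,Q_{t,\kappa(t)}=Q_a$ linearly in $m_t$. Because $Q_t \subseteq \lambda Q_{t,j}$ forces $l_{t,j}\geq l_t/\lambda$, the scales $m_{t,j}$ lie in $\{0,1,\dots,m_t+\lceil\log_2\lambda\rceil\}$. For each fixed scale $k$ in that range, all cubes of scale $k$ in the chain fit inside $\lambda Q_{t,j^*}$ (where $j^*$ is the largest index at scale $k$ and the Boman condition is applied to $i\leq j^*$), so a volume count bounds their number by $\lambda^{2n}$. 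Summing over scales yields $\kappa(t)\leq (1+\lambda^{2n})(m_t+\log_2\lambda+1)$, which has exactly the linear structure appearing in $l_\ast$.

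Since the Boman chain realizes a walk in $G$, $k_G(t,a)\leq\kappa(t)$. Running BFS from $a$, as in the proof of Lemma \ref{directed graph}, produces a spanning tree $\tilde{G}$ rooted at $a$, with $\tilde{k}(v,a)=k_G(v,a)$ for every $v$ and with each tree edge joining a pair of Whitney neighbors. For $t\preceq t'$ in $\tilde{G}$ the tree-path from $t$ to $t'$ is a sub-path of the root-to-$t'$ branch, so
\[
k(t,t')=\tilde{k}(a,t')-\tilde{k}(a,t).
\]
Combining the upper bound $\tilde{k}(a,t')\leq\kappa(t')$ from the first paragraph with the lower bound $\tilde{k}(a,t)\geq m_t/2$ (consecutive Whitney neighbors differ in scale by at most $2$) would give an estimate of the right overall shape, provided $l_\ast$ and $k_\ast$ are calibrated appropriately.

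\textbf{Main obstacle.} The subtle step is this relative bound: plain BFS plus the crude lower bound $m_t/2$ produces a mismatch of coefficients in $m_t$ and $m_{t'}$, rather than the homogeneous form $l_\ast(m_{t'}-m_t)+k_\ast$, and in particular does not forbid $m_{t'}<m_t$ when $t\preceq t'$. I expect the fix to be a refinement of the tree construction --- the ideas of Vasil'eva referred to in the text --- in which vertices are processed in non-decreasing order of $m_t$ and each $t$ is assigned a previously processed Whitney neighbor as its parent, with ties broken acyclically when no strictly smaller-scale neighbor exists. This enforces $m_{t_p}\leq m_t$, so that ancestors have no larger scale than descendants, and turns each ancestor-to-descendant tree-path from $t$ to $t'$ into a Boman-style sub-chain between $Q_t$ and $Q_{t'}$ to which the first-paragraph estimate applies directly, producing the constants $l_\ast$ and $k_\ast$ in the statement.
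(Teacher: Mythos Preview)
Your volume argument in the first paragraph is essentially the one the paper uses, and you correctly diagnose why a plain BFS tree plus the crude lower bound $\tilde k(a,t)\ge m_t/2$ does not deliver the homogeneous form $l_\ast(m_{t'}-m_t)+k_\ast$. The gap is in your proposed fix. Processing vertices in non-decreasing order of $m_t$ and assigning each a previously processed Whitney neighbor as parent can fail outright: nothing in the Whitney or Boman conditions guarantees that a cube $Q_t$ has a neighbor $Q_s$ with $m_s\le m_t$; along a Boman chain the scale can go \emph{up} before it goes down (only $l_{t,j}\ge l_t/\lambda$ is forced, not $l_{t,1}\ge l_t$). More fundamentally, even if such a parent always existed, the ancestor--descendant path in your tree would merely be a walk through Whitney neighbors, not a Boman chain: there is no reason the inclusions $Q_{t,i}\subseteq\lambda Q_{t,j}$ should hold along it, so your first-paragraph estimate cannot be applied ``directly'' as you claim.

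The paper avoids both problems by building the tree inductively in stages $m=0,1,2,\dots$, but in a way that \emph{uses} the Boman chains to form the new tree edges. At stage $m$ one takes every cube of scale exactly $m$ not yet in the tree and follows its Boman chain only until it first meets the already-built tree $\Gamma_{m-1}$; since $\Gamma_{m-1}$ already contains all cubes of scale $\le m-1$, the intermediate cubes along this stub all have scale $\ge m$, and your own volume count (at the single scale $m$, inside $\lambda Q_{t,r}$) bounds the stub length by $\lambda^{2n}$. Lemma~\ref{directed graph} is then applied, but only to this finite auxiliary graph of stubs (with the existing tree collapsed to a single artificial root), to prune them into a forest that is grafted onto $\Gamma_{m-1}$. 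This yields a tree whose level sets $\Gamma_m\setminus\Gamma_{m-1}$ each have depth $\le 1+\lambda^{2n}$ and contain only cubes of scale between $m$ and $m+1+\log_2\lambda$; the estimate \eqref{QH distance} then follows by summing these layer bounds between $t$ and $t'$, handling separately the cases $m_t\ge m$ and $m_t\le m-1$ relative to the stage at which $t'$ was added. The point you were missing is that the tree must be assembled \emph{out of Boman chain segments}, not merely out of Whitney adjacencies, so that the Boman inclusion survives to drive the length bound at each stage.
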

\begin{proof} We will prove this result by using a inductive argument. As we mentioned before, we are assuming that $Q_a$ has maximal side length equal to 1. Thus, we will define a collection of rooted trees $G_m=(\Gamma_m,E_m)$ for any $m\in\N_{\geq -1}$ such that $G_m$ is a subgraph of $G_{m+1}$ (i.e. $\Gamma_m\subseteq \Gamma_{m+1}$ and $E_m\subseteq E_{m+1}$) and  all of them are subgraphs of $G_{\O}=(\Gamma,E_{\O})$, where two vertices $t,t'\in\Gamma$ are adjacent in $G_\O$ if and only if $Q_t$ and $Q_{t'}$ are neighbors. Moreover, $\bigcup_m\Gamma_m=\Gamma$. 

The inductive hypothesis that we will use says:
\begin{itemize}
\item[{\rm (h1)}] $\Gamma_m$ contains all the cubes $Q_t$ with $m_t=m$.
\item[{\rm (h2)}] $m_t\leq m+1+\log_2(\lambda)$ for all $t\in \Gamma_m$. 
\item[{\rm (h3)}] If $t,t'\in \Gamma_m\setminus \Gamma_{m-1}$ with $t'\succeq t$, then $k(t,t')\leq \lambda^{2n}$.
\item[{\rm (h4)}] Condition (\ref{QH distance}) is satisfied for any $t,t'\in\Gamma_m$.
\end{itemize}

Let us start by defining $G_{-1}$ which has $\Gamma_{-1}:=\{a\}$ and $E_{-1}:=\emptyset$. 
It can be easily checked that $G_{-1}$ is a subtree of $G_\O$ that satisfies (h1) to (h4). 
So, let us suppose that we have a collection $G_{-1},\, G_{0},\,\cdots,G_{m-1}$ for $m\geq 0$ verifying all the properties mentioned above. To construct $G_m$, let us start by taking all the subindices $t\in\Gamma\setminus\Gamma_{m-1}$ with $m_t=m$. In case there are no $t$ with these properties we simply define $G_m:=G_{m-1}$. Thus, for each of those indexes with $m_t=m$ there exists a chain of cubes satisfying (\ref{Boman}) that connects $Q_t$ and $Q_a$ with adjacent cubes. However, we are going to consider just the first part of this chain which joins $Q_t$ with a cube $Q_s$ with $s\in\Gamma_{m-1}$. This element $s$ is the first one with this property (considering the order in the chain). Let us denote this portion of the original chain as $Q_{t,1},\cdots,Q_{t,r},Q_{t,r+1}$, with $Q_{t,1}=Q_t$ and $Q_{t,r+1}=Q_s$. Thus $Q_{t,r}$ is a cube such that its index does not belong to $\Gamma_{m-1}$. The number of cubes $r=r(t)$ is bounded by $r\leq \lambda^{2n}$. In order to prove this fact observe that all the cubes in the chain intersect each other in a set with Lebesgue measure zero thus 
\begin{eqnarray*}
\sum_{j=1}^{r}|Q_{t,j}|\leq \lambda^n |Q_{t,r}|.
 \end{eqnarray*}
$Q_{t,r}$ is a cube such that its index does not belong to $\Gamma_{m-1}$, thus using (h1) we know that $m(Q_{t,r})\geq m$ and the right hand side of the previous inequality satisfies that 
\begin{eqnarray*}
\lambda^n |Q_{t,r}|=\lambda^n 2^{-n\,m(Q_{t,r})} \leq \lambda^n 2^{-nm}.
\end{eqnarray*}
Now, $Q_t\subseteq \lambda Q_{t,j}$ for all $1\leq j\leq r$. Then using that $m_t=m$ we have that 
\begin{eqnarray*}
 r\lambda^{-n} 2^{-nm} \leq \sum_{j=1}^{r}|Q_{t,j}|.
\end{eqnarray*}
Thus, $r\leq \lambda^{2n}$. 

Now, we define an auxiliary graph $G=(V,E)$, where the set $V$ has a vertex $v_\ast\not\in \Gamma$. The rest of the vertices are the indexes in $\Gamma\setminus\Gamma_m$ of the cubes in $Q_{t,1},\cdots,Q_{t,r} $ for all $Q_t$ with $m_t=m$. Regarding the set $E$, we join two vertices in $V$ by an edge if they are the indices of two consecutive cubes in a chain $Q_{t,1},\cdots,Q_{t,r} $, or one is $v_\ast$ and the other one is the index of the tail cube $Q_{t,r}$ in a chain $Q_{t,1},\cdots,Q_{t,r} $. Next, using Lemma \ref{directed graph} we know that removing some edges from $G$ it is possible to obtain a rooted tree $\tilde{G}=(V,\tilde{E})$ with root $v_\ast$ such that the length of each chain connecting the vertices with $v_\ast$ does not exceed $\lambda^{2n}$. Finally, in order to construct $\Gamma_m$, we cut the subtrees added to the artificial vertex $v_\ast$ and add them to $\Gamma_{m-1}$, specifically to the indexes of the cubes $Q_{t,r+1}$ in the tail of chain. This procedure defines a rooted tree $G_m=(\Gamma_m,E_m)$ with root $a$ that containes $G_{m-1}$ as a subgraph. Once we have defined $G_m=(\Gamma_m,E_m)$, it remains to prove that $G_m$ satisfies (h1) to (h4).
 
 Property (h1) follows by construction. 
 
 Next, in order to prove (h2) it is sufficient to consider the case when $s$ belongs to $\Gamma_m \setminus \Gamma_{m-1}$. By construction $\lambda Q_s$ contains a cube $Q_t$ with $m_t=m$. Thus, $\lambda\,\text{diam}(Q_s)\geq \text{diam}(Q_t)$, then after some straightforward calculations we obtain $m_s\leq m+\log_2(\lambda)$.
 
The third condition (h3) also follows by construction. We only have to show the validity of (h4) in $\Gamma_m$. For this last case, we use the inductive hypothesis (h1)-(h4) on $\Gamma_{m-1}$, and the already proved (h1)-(h3) on $\Gamma_m$.
Now, given $t,t'\in\Gamma_m$ with $t\preceq t'$ we have to show that (\ref{QH distance}) holds. We may assume that $t'$ belongs to $\Gamma_m\setminus\Gamma_{m-1}$, otherwise (\ref{QH distance}) follows by using the inductive hypothesis. Thus, $m_{t'}\geq m$. We split the proof in two cases,
$m_t\geq m$ and $m_t\leq m-1$. Let us start with the first one. 

$m_t\geq m$: If $t$ belongs to $\Gamma_{m-j}$, for some $0\leq j\leq m+1$, then $j\leq 1+\log_2(\lambda)$. Indeed, using (h2) 
\begin{eqnarray*}
 m \leq m_t \leq m-j+1+\log_2(\lambda).
\end{eqnarray*}
Moreover, let $i$ be a number in the interval $m-j\leq i\leq m$. Hence, using (h3) we can conclude that the number of indexes $s\in\Gamma_{i}\setminus\Gamma_{i-1}$ such that $t\preceq s\preceq t'$ does not exceed $1+\lambda^{2n}$. Thus, 
\begin{eqnarray}\label{bound for k}
k(t,t')\leq (1+\lambda^{2n})(2+\log_2(\lambda)).
\end{eqnarray}
Now, using (h1) and (h2) we have 
\begin{eqnarray*}
m_{t'}-m_t\geq m-m_t \geq m-m-1-\log_2(\lambda)=-1-\log_2(\lambda).
\end{eqnarray*}
Thus, using (\ref{bound for k})
\begin{eqnarray*}
k(t,t')\leq k_\ast-l_\ast(1+\log_2(\lambda))\leq l_{\ast}(m_{t'}-m_t)+k_{\ast}.
\end{eqnarray*}

$m_t\leq m-1$: We know that $m_{t'}\geq m$, thus there exist two consecutive vertices $t_1, t_2$ in $t\preceq t_1\prec t_2\preceq t'$ such that $m_{t_1}\leq m-1$ and $m_{t_2}\geq m$. Now, $t_2$ and $t'$ are in the previous situation so we use (\ref{bound for k}) obtaining 
\begin{eqnarray*}
k(t_2,t')\leq (1+\lambda^{2n})(2+\log_2(\lambda)).
\end{eqnarray*}
Note that from (h1) we have that $t$ and $t_1$ belong to $\Gamma_{m-1}$, then using the inductive hypothesis 
\begin{eqnarray*}
k(t,t')&=&k(t,t_1)+k(t_1,t_2)+k(t_2,t')\\
&\leq & l_{\ast}(m_{t_1}-m_t)+k_{\ast}+1+(1+\lambda^{2n})(2+\log_2(\lambda))\\
&=&l_{\ast}(m_{t_1}-m_t)+k_{\ast}+l_\ast\leq l_{\ast}(m_{t'}-m_t)+k_{\ast},
\end{eqnarray*}
concluding the proof.
\end{proof}

\begin{proof}[Proof of Lemma \ref{Existence of a tree}] This result is a corollary of Lemma \ref{Length of the chain}. Indeed, 
given $s,t\in\Gamma$ with $t\preceq s$, we denote by $\alpha_{t}$ the center of $Q_{t}$ and take an arbitrary $y\in Q_s$. Then, using that two adjacent vertices in $\Gamma$ are the indexes of neighbor cubes, we have 
\begin{eqnarray*}
\text{dist}(\alpha_{t},y)&\leq& \sum_{t\preceq t'\preceq s} \text{diam}(Q_{t'})\\
&=& \sqrt{n}\sum_{t\preceq t'\preceq s} 2^{-m_{t'}}\\
&=& \sqrt{n}\,2^{-m_t}\sum_{t\preceq t'\preceq s} 2^{-(m_{t'}-m_t)}=(I)
\end{eqnarray*}
Next, from (\ref{QH distance}) 
\begin{eqnarray*}
(I)&\leq&\sqrt{n}\,2^{-m_t}\sum_{t\preceq t'\preceq s} 2^{-\frac{1}{l_\ast}(k(t,t')-k_\ast)}\\
&=&\sqrt{n}\,2^{-m_t}2^{k_\ast/l_\ast}\sum_{i=0}^{k(t,s)} \left(2^{-1/l_\ast}\right)^{i}.
\end{eqnarray*}
Finally, the following constant fulfills (\ref{Boman tree})
\begin{eqnarray*}
K:=2^{1+k_\ast/l_\ast}\sqrt{n}\sum_{i=0}^{\infty} \left(2^{-1/l_\ast}\right)^{i}.
\end{eqnarray*}

\end{proof}


\end{appendices}


\begin{thebibliography}{}
 
\bibitem{ADM}{\sc G. Acosta, R. G. Dur\'an, and M. A. Muschietti}, 
{\it Solutions of the divergence operator on {J}ohn domains}, 
Adv. Math. {\bf 206} (2006), 373-401.

\bibitem{AO} {\sc G. Acosta, and I. Ojea}, 
{\it Korn's inequalities for generalized external cusps}, 
Math. Meth. in the App. Sciences (to appear).

\bibitem{Bog}{\sc M. E. Bogovski}, 
{\it Solution of the first boundary value problem for an equation of continuity of an incompressible medium}, 
Dokl. Akad. Nauk SSSR {\bf 248} (1979), 1037-1040.
  
\bibitem{B}{\sc J. Boman}, 
{\it Lp-estimates for very strongly elliptic systems}, 
Report No. 29, Department of Mathematics, University of Stockholm, Sweden, 1982.
 
\bibitem{BKL} {\sc S. Buckley, P. Koskela, and G. Lu}, 
{\it Boman equals John}, 
Proceedings of the 16th Rolf Nevanlinna Colloquium (1995), 91-99.

\bibitem{ChD}{\sc S. K. Chua, and H. Y. Duan}, 
{\it Weighted Poincar\'e inequalities on symmetric convex domains}, 
Indiana Univ. Math. J. {\bf 58} (2009), 2103-2114.

\bibitem{CW}{\sc S. K. Chua, and R. L. Wheeden}, 
{\it Weighted Poincar\'e inequalities on convex domains}, 
Math. Res. Lett. {\bf 17} (2010), 993-1011.

\bibitem{CW2}{\sc S. K. Chua, and R. L. Wheeden}, 
{\it Self-improving properties of inequalities of Poincar\'e type on {$s$}-John domains}, 
Pacific J. Math. {\bf 250} (2011), 67-108. 
		
\bibitem{CD}{\sc M. Costabel, and M. Dauge}, {\it On the inequalities of Babu\v ska-Aziz, Friedrichs and Horgan-Payne}, 
Arch. Ration. Mech. Anal. {\bf 217} (2015), 873-898.

\bibitem{DRS}{\sc L. Diening, M. Ruzicka, and K. Schumacher}, 
{\it A decomposition technique for John domains}, Ann. Acad. Sci. Fenn. Math. {\bf 35} (2010), 87-114.

\bibitem{Du}{\sc R. G. Dur{\'a}n}, 
{\it An elementary proof of the continuity from $L^2_0(\Omega)$ to $H^1_0(\Omega)^n$ of Bogovskii's right inverse of the divergence}, 
Rev. Un. Mat. Argentina, {\bf 53}  (2012), 59-78. 

\bibitem{DM}{\sc R. G. Dur{\'a}n, and M. A. Muschietti}, 
{\it The Korn inequality for Jones domains}, 
Electron. J. Differential Equations {\bf 127} (2004), 10 pages.		
		
\bibitem{DMRT}{\sc R. G. Dur\'an, M. A. Muschietti, E. Russ, and P. Tchamitchian}, 
{\it Divergence operator and {P}oincar\'e inequalities on arbitrary bounded domains}, 
Complex Var. Elliptic Equ. {\bf 55} (2010), 795-816.

\bibitem{F}{\sc K. O. Friedrichs}, 
{\it On the boundary-value problems of the theory of elasticity and Korn's inequality}, 
Ann. of Math. (2) {\bf 48} (1947), 441-471.		

\bibitem{G}{\sc G. P. Galdi}, 
{\it An introduction to the mathematical theory of the Navier-Stokes equations}, 
Springer Monographs in Mathematics, Springer, New York, 2011.
 		
\bibitem{H}{\sc C. O. Horgan}, 
{\it Korn's inequalities and their applications in continuum mechanics}, 
SIAM Rev. {\bf 37} (1995), 491-511.   

\bibitem{HP}{\sc C. O. Horgan, and L. E. Payne}, 
{On inequalities of Korn, Friedrichs and Babuska-Aziz}, 
Arch. Rational Mech. Anal. {\bf 82} (1983), 165-179. 

\bibitem{Hu}{\sc R. Hurri-Syrj{\"a}nen}, 
{\it An improved Poincar\'e inequality}, 
Proc. Amer. Math. Soc. {\bf 120} (1994), 213-222.		

\bibitem{Joh}{\sc F. John}, 
{\sc Rotation and strain}, 
Comm. Pure Appl. Math. {\bf 14} (1961), 391-413.
		
\bibitem{Jo}{\sc P. W. Jones}, 
{\it Quasiconformal mappings and extendability of functions in Sobolev spaces}, 
Acta Math. {\bf 147} (1981), 71-88.
   
\bibitem{JK}{\sc R. Jiang, and A. Kauranen}, 
{\it Korn inequality on irregular domains}, 
J. Math. Anal. Appl. {\bf 423} (2015), 41-59   
   
\bibitem{KO}{\sc V. A. Kondratiev, and O. A. Oleinik}, 
{\it On Korn's inequalities}, 
C. R. Acad. Sci. Paris S\'er. I Math. {\bf 308} (1989), 483-487.
   
\bibitem{K1} {\sc A. Korn}, 
{\it Die Eigenschwingungen eines elastichen Korpers mit ruhender Oberflache}, 
Akademie der Wissenschaften, Math-Phis. K1, Berichte {\bf 36} (1906), 351-401.

\bibitem{K2} {\sc A. Korn}, 
{\it Ubereinige Ungleichungen, welche in der Theorie der elastischen und elektrischen Schwingungen eine Rolle spielen}, 
Bulletin Internationale, Cracovie Akademie Umiejet, Classe de sciences mathematiques et naturelles {\bf 3} (1909), 705-724.

\bibitem{L} {\sc F. L\'opez Garc\'ia}, 
{\it A decomposition technique for integrable functions with applications to the divergence problem}, 
J. Math. Anal. Appl., {\bf 418} (2014), 79-99.

\bibitem{MS}{\sc O. Martio, and J. Sarvas}, 
{\it Injectivity theorems in plane and space}, 
Ann. Acad. Sci. Fenn. Ser. A I Math. {\bf 4} (1979), 383-401.		

\bibitem{R} {\sc Y. G. Reshetnyak}, 
{\it Estimates for certain differential operators with finite-dimensional kernel}, 
Sibirsk. Mat. $\check{\text{Z}}$. {\bf 11} (1970), 414-428.

\bibitem{S}{\sc E. M. Stein}, 
{\it Singular Integrals and Differentiability Properties of Functions}, 
Princeton Univ. Press, 1970.

\bibitem{T}{\sc T. W. Ting}, 
{\it Generalized {K}orn's inequalities}, 
Tensor (N.S.) {\bf 25} (1972), 295-302. 

\bibitem{V} {\sc A. A. Vasil'eva}, 
{\it Widths of weighted Sobolev classes on a John domain}, 
Proc. Steklov Inst. Math. {\bf 280} (2013), 91-119.

\end{thebibliography}
\end{document}